\definecolor{arrowblue}{RGB}{0,0,0}  
\newtheorem{thm}{Theorem}[section]
\newtheorem*{thm*}{Theorem}
\newtheorem{lem}[thm]{Lemma}
\newtheorem{definition}[thm]{Definition}
\newtheorem{clm}{Claim}[thm]
\newtheorem{prop}[thm]{Proposition}
\newtheorem{cor}[thm]{Corollary}
\theoremstyle{definition}
\newtheorem{rmq}[thm]{Remark}
\newcommand{\E}[2][]{\ensuremath{\mathbb{E}_{#1}\left[#2 \right]}}
\newcommand{\Prob}[2][]{\ensuremath{\mathbb{P}_{#1} \left(#2 \right)}}
\newcommand{\N}{\mathbb{N}}
\def\Var{\mathop{\rm Var}\nolimits}
\newcommand{\var}[2][]{\ensuremath{\Var_{#1} \left(#2 \right)}}
\newcommand{\eps}{\varepsilon}
\newcommand{\dd}{\mathrm d}
\DeclareMathOperator{\exponentialrv}{Exp}
\newcommand{\ssup}[1] {{\scriptscriptstyle{({#1}})}}
\title{Permutations in competing growth processes and balls-in-bins}
\author{Johannes B\"aumler\footnote{Department of Mathematics, University of California, Los Angeles, USA.} \ and Tejas Iyer\footnote{Weierstrass Institute for Applied Analysis and Stochastics, Anton-Wilhelm-Amo-Str. 39, 10117 Berlin, Germany.}}
\date{\today}
\begin{document}

\maketitle
\abstract{Consider a model of $N$ independent, increasing $\mathbb{N}_0$-valued processes, with random, independent waiting times between jumps. It is known that there is either an emergent `leader', in which a single process possesses the maximal value for all sufficiently large times, or every pair of processes alternates leadership infinitely often. We show that in the latter regime, almost surely, one sees every possible permutation of rankings of processes infinitely often. In the case that the waiting times are exponentially distributed, this proves a conjecture from Spencer (appearing in a paper from Oliveira) on the `balls-in-bins' process with feedback \cite[Conjecture 1]{oliveira-brownian-motion}.
}
\noindent  \bigskip
\\
{\bf Keywords:}  Growth processes, birth processes, balls-in-bins processes with feedback, generalised P\'olya urns, non-linear urns, convergence of random series, reinforced processes. 
\\\\
{\bf 2020 Mathematics Subject Classification:} 60G51, 60J74, 91B70.

\section{Introduction}

A natural model for the evolution of the wealth of entities over time is to consider competing birth processes. One can consider a fixed, finite number of `agents' with `values'  increasing in steps, from $j-1$ to $j$ after a random amount of time $X_{j}$. In the case where the random variables $(X_{j})_{j \in \mathbb{N}}$ are independent, and identically distributed across agents, in~\cite{fixation-leadership-growth-processes}, the second author showed that with probability zero or one, a single individual becomes the \emph{leader}, possessing the maximum wealth for all sufficiently `large' times. In addition, the author showed that, in the regime of non-leadership, any two agents will fluctuate in order of value infinitely often.  

This result was a generalisation of previous results in the literature~\cite{khanin-neuron-polarity, oliveira-brownian-motion, oliveira-onset-of-dominance} which dealt with the case that the $(X_{j})_{j \in \mathbb{N}}$ are exponentially distributed random variables. Via a result commonly termed `Rubin's construction' in the literature~\cite{davis-rubin} (closely connected to the Athreya-Karlin embedding \cite{arthreya-karlin-embedding-68}), it is known that when the random variables are exponentially distributed, with $X_{j} \sim \exponentialrv((f(j-1))$, the collection of values of agents in the system, as the values change, behaves like the following discrete `balls-in-bins' process with feedback: at each new time step, a bin with $m$ balls is selected with probability proportional to $f(m)$ and a new ball is added to the bin.    

A natural conjecture is that, in the regime of non-leadership, given that we already know that any pair of agents fluctuates in ordering of value infinitely often, one in fact sees \emph{any} possible permutation of ordered values of agents infinitely often. In the context of balls-in-bins processes, this was conjectured by Spencer~\cite[Conjecture~1]{oliveira-brownian-motion}, stated in a paper from Oliveira. In this paper, we show that these conjectures hold.

We note that the results presented here have implications beyond urn models, for example, to the preferential attachment tree models analysed in~\cite{banerjee2021persistence, rudas} in regimes where there is no `leader' (coined \emph{persistent hubs} in this context) - see Remark~\ref{rem:pref-attach} further below. 

\subsection{Model description and main results}
We consider a finite collection of $\mathbb{N}_{0}$-valued growth processes with independent waiting times between jumps. Suppose we have $A \geq 2$ \emph{agents} labelled by the elements of $[A] := \left\{1, \ldots, A\right\}$. To each agent~$a \in [A]$, we associate an identically distributed sequence of mutually independent random variables $(X^{\ssup{a}}_{j})_{j \in \mathbb{N}}$, taking values in $[0, \infty)$, such that the sequences $(X^{\ssup{a}}_{j})_{j \in \mathbb{N}}$ are independent across different agents $a \in [A]$. At each time $t\geq 0$, each agent $a \in [A]$ has a \emph{value} $v_a(t) \in \mathbb{N}_0$ such that for each agent $a \in [A]$, its value $v_{a}\colon [0, \infty) \rightarrow \mathbb{N}_0$ increases over time. The random variable $X^{\ssup{a}}_{j}$ denotes the time taken for the value of agent $a$ to increase from $j-1$ to $j$. Additionally, to each agent $a \in [A]$, we associate an \emph{initial value} $v_{a}^{\text{in}} \in \mathbb{N}_0$. Thus, given the value $v_{a}^{\text{in}}$, for $k \in \mathbb{N}_{0}$ we have 
\[
v_a(t) = v_{a}^{\text{in}} + k \quad \text{if and only if} \quad \sum_{j=v_{a}^{\text{in}} + 1}^{v_{a}^{\text{in}} + k} X^{\ssup{a}}_{j} \leq t < \sum_{j=v_{a}^{\text{in}} + 1}^{v_{a}^{\text{in}} + k+1} X^{\ssup{a}}_{j}. 
\]  
Note that if $X_1^{\ssup{a}}>0$, then $v_a(0)=v_{a}^{\text{in}}$.
We are interested in the vector of values of agents, i.e., $\left(v_a(t)\right)_{a \in [A]}$, as time evolves. \\

Throughout this paper, for a random variable $Y$, we denote by $Y^{s}$ a random variable distributed like $Y-Y'$, where $Y'$ is an i.i.d. copy of $Y$. We denote by $\left(X^{s}_{j}\right)_{j\in \mathbb{N}}$ a sequence of independent random variables such that $X^{s}_{j}$ is distributed like $X^{\ssup{1}}_{j} - X^{\ssup{2}}_{j}$. 

Our main result is the following:

\begin{thm} \label{thm:main}
Suppose that the random series $\sum_{j=1}^{\infty}X^{s}_{j}$ diverges almost surely.
Then, almost surely, for any permutation $\pi\colon [A] \rightarrow [A]$, 
\begin{equation}
\exists (t_{i})_{i \in \mathbb{N}} \in [0, \infty)^{\mathbb{N}}\colon \quad \lim_{i \to \infty} t_{i} = \infty \quad \text{and} \quad \forall i \in \mathbb{N} \quad 
v_{\pi(1)}(t_{i}) \geq v_{\pi(2)}(t_{i}) \geq \cdots \geq v_{\pi(A)}(t_{i}). 
\end{equation}
\end{thm}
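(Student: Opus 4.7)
Write $E_\pi$ for the event in the conclusion of the theorem, i.e., that there exist arbitrarily large times $t$ at which $v_{\pi(1)}(t) \geq \cdots \geq v_{\pi(A)}(t)$. The plan is to show $\mathbb{P}(E_\pi) = 1$ for every permutation $\pi$, by combining a Hewitt--Savage zero-one law with an exchangeability and pigeonhole argument.

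First, I would apply Hewitt--Savage to the i.i.d.\ vector sequence $Y_j := (X_j^{\ssup{1}}, \ldots, X_j^{\ssup{A}})$, $j \in \mathbb{N}$. For a transposition swapping indices $j_0, k_0$ larger than $\max_a v_a^{\text{in}}$, the only effect on each agent's trajectory is confined to the (a.s.\ finite) time interval during which the agent is passing through values in $\{j_0, \ldots, k_0-1\}$; beyond this time the swapped and unswapped trajectories coincide. Since $E_\pi$ depends only on arbitrarily large times, it is invariant under such transpositions, and Hewitt--Savage yields $\mathbb{P}(E_\pi) \in \{0,1\}$. The remaining swaps---those involving indices $\leq \max_a v_a^{\text{in}}$---produce bounded random time-shifts of individual trajectories; to see that $E_\pi$ is still invariant I would invoke the pairwise fluctuation result of \cite{fixation-leadership-growth-processes}, which under our hypothesis gives $\limsup_{t \to \infty} |v_a(t) - v_b(t)| = \infty$ for every pair, so that any bounded shift is harmless to the i.o.\ occurrence of an ordering.

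For positivity, I would first handle the symmetric case where all $v_a^{\text{in}}$ are equal. Here the i.i.d.\ structure of $(X_j^{\ssup{a}})_{j,a}$ in both indices renders the joint law of $(v_a(t))_{a \in [A], t \geq 0}$ invariant under permutations of agent labels, so $\mathbb{P}(E_\pi) = \mathbb{P}(E_{\pi'})$ for all $\pi, \pi'$. Since some ordering of the values holds at every time, $\mathbb{P}\bigl(\bigcup_\pi E_\pi\bigr) = 1$ by pigeonhole, forcing $\mathbb{P}(E_\pi) \geq 1/A! > 0$; combined with the zero-one law this gives $\mathbb{P}(E_\pi) = 1$. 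To reduce general initial values to this symmetric case, I would restart the process at the first (a.s.\ finite) time $\tau$ after which all agents have surpassed $\max_a v_a^{\text{in}}$, and argue that the future trajectories from $\tau$ satisfy the same hypotheses up to bounded random perturbations of their starting values, which by the same pairwise-fluctuation argument cannot affect $E_\pi$.

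The principal obstacle I anticipate is this reduction to the symmetric case: showing that differing initial values do not alter the (already $0$ or $1$) value of $\mathbb{P}(E_\pi)$ requires a careful marriage of the Hewitt--Savage invariance with the pairwise fluctuation result of \cite{fixation-leadership-growth-processes}. The divergence hypothesis $\sum_{j=1}^\infty X_j^s = \infty$ is indispensable precisely because it drives the unbounded pairwise oscillations that render bounded perturbations irrelevant for the i.o.\ occurrence of any given ordering.
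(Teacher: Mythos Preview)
Your first step already fails in this model: Hewitt--Savage requires the sequence $(Y_j)_{j\in\mathbb{N}}$ to be i.i.d., but the hypotheses only demand that the sequences $(X_j^{\ssup{a}})_{j}$ be identically distributed \emph{across agents} $a$; the law of $X_j^{\ssup{a}}$ is allowed to depend on $j$. In the motivating balls-in-bins example one has $X_j^{\ssup{a}}\sim\Exp{f(j-1)}$ with $f$ nonconstant, so the $Y_j$ are independent but not identically distributed and the exchangeable $\sigma$-algebra need not be trivial. Kolmogorov's zero--one law is no substitute: $E_\pi$ is not a tail event, since altering finitely many $X_j^{\ssup{a}}$'s induces a permanent time-shift of the corresponding trajectory and can in principle change which full orderings recur.

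Even granting a zero--one law, your reduction from general to equal initial values has a real gap. Whatever pairwise fluctuation gives you---infinitely many leadership swaps, or even $\limsup_{t\to\infty}|v_a(t)-v_b(t)|=\infty$---it does not show that a bounded time-shift of one agent preserves $E_\pi$: knowing that each \emph{pair} oscillates i.o.\ is strictly weaker than knowing that a fixed \emph{joint} ordering of all $A$ agents recurs i.o., and bridging that gap is exactly the content of the theorem, so the appeal is circular. The paper proceeds quite differently. It uses a quantitative anti-concentration bound (Petrov's dispersion inequality combined with the Kolmogorov three-series criterion) to show that the \emph{conditional} probability of the desired ordering at time $t$, given the past, stays bounded below by a positive constant as $t\to\infty$, uniformly over bounded time-shifts of the individual agents; L\'evy's extension of Borel--Cantelli then delivers the i.o.\ occurrence. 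This route is insensitive to $j$-dependent laws of $X_j$, and the passage from equal to general initial values is handled by an independent coupling trick (randomly zeroing out some early waiting times) rather than by invoking pairwise results.
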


As outlined in the introduction, via Rubin's construction, the above theorem has implications for balls-in-bins processes with feedback. We recall the definition of such processes: we are given $A$ \emph{bins}, a \emph{feedback} function $f\colon \mathbb{N}_0 \rightarrow (0, \infty)$, and an initial collection of \emph{balls} in bins $(u_{a}(0))_{a \in [A]} \in \mathbb{N}^{A}$. Then, recursively, for $n \in \mathbb{N}$:
\begin{enumerate}
\item A bin $a \in [A]$ is sampled with probability 
			 \[\frac{f(u_{a}(n-1))}{\sum_{a \in [A]} f(u_a(n-1))}. \]
\item We set $u_{a}(n) = u_{a}(n-1) + 1$, whilst for $a' \neq a$, we set $u_{a'}(n) = u_{a'}(n-1)$.			 
\end{enumerate} 

\begin{cor}[{\cite[Conjecture~1]{oliveira-brownian-motion}}] \label{cor:conj}
    Consider a balls-in-bins process $(u_{a}(n))_{a \in [A], n \in \mathbb{N}_0}$ with feedback function $f\colon \mathbb{N}_0 \rightarrow (0, \infty)$ such that 
    \begin{equation} \label{eq:div-variance}
    \sum_{i=0}^{\infty} \frac{1}{f(i)^{2}} = \infty. 
    \end{equation} 
Then, almost surely, for any permutation $\pi\colon [A] \rightarrow [A]$, there exist infinitely many $n \in \mathbb{N}_0$ such that 
    \[
    u_{\pi(1)}(n) \geq u_{\pi(2)}(n) \geq \cdots \geq u_{\pi(A)}(n). 
    \]
\end{cor}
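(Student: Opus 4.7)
The plan is to reduce Corollary~\ref{cor:conj} to Theorem~\ref{thm:main} via Rubin's construction. I would take $X_j^{\ssup{a}} \sim \Exp(f(j-1))$ for $j \in \mathbb{N}$ and $a \in [A]$ (independent in both indices), set the initial values of the growth process to be $v_a^{\text{in}} = u_a(0)$, and form the associated continuous-time processes $(v_a(t))_{a\in [A]}$. By the Athreya--Karlin embedding (Rubin's construction), if $(T_n)_{n\in\mathbb{N}_0}$ enumerates the jump times of $\sum_a v_a(\cdot)$, then $(v_a(T_n))_{a\in [A], n \in \mathbb{N}_0}$ has the same joint distribution as the balls-in-bins process $(u_a(n))_{a\in [A],n\in \mathbb{N}_0}$. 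Since $f$ is strictly positive, $T_n \to \infty$ almost surely. Thus, if almost surely every permutation of rankings appears at a sequence of continuous times tending to infinity, the same is true of the discrete times $(T_n)$, because the vector of values is constant on each interval $[T_n, T_{n+1})$.

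Given the above reduction, the task reduces to verifying the hypothesis of Theorem~\ref{thm:main}, namely that $\sum_{j=1}^\infty X_j^s$ diverges almost surely, where $X_j^s = X_j^{\ssup{1}} - X_j^{\ssup{2}}$ is a symmetric random variable of Laplace type with rate $f(j-1)$. I would apply the Kolmogorov three-series theorem, which for symmetric independent summands states that $\sum_j X_j^s$ converges almost surely if and only if $\sum_j \E{(X_j^s)^2 \wedge 1} < \infty$. A short computation with the Laplace density gives $\E{(X_j^s)^2 \wedge 1} \asymp \min(1, 1/f(j-1)^2)$.

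It then suffices to show that $\sum_{i=0}^\infty \min(1,1/f(i)^2) = \infty$ whenever $\sum_{i=0}^\infty 1/f(i)^2 = \infty$. If $f(i) \geq 1$ for all but finitely many $i$, then $\min(1,1/f(i)^2) = 1/f(i)^2$ eventually, so the two sums differ by finitely many terms. Otherwise, $f(i) < 1$ infinitely often, along which indices $\min(1, 1/f(i)^2) = 1$; in either case the $\min$ series diverges. Hence $\sum_j X_j^s$ does not converge; by the Kolmogorov 0--1 law, it diverges almost surely, and Theorem~\ref{thm:main} applies.

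No step here is a serious obstacle: Rubin's construction is classical and is already invoked in the introduction, and the verification of the series divergence is a standard three-series calculation. The only mild subtlety is the possibility that $f(i)$ is not bounded away from zero or from infinity, which is why one must pass through the truncated moment $\E{(X_j^s)^2 \wedge 1}$ rather than directly comparing to $\Var(X_j^s) = 2/f(j-1)^2$; the case split above handles it.
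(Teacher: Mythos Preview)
Your approach is the same as the paper's: reduce to Theorem~\ref{thm:main} via Rubin's construction, then verify the divergence hypothesis on $\sum_j X^s_j$. The paper simply cites an external reference for that divergence under~\eqref{eq:div-variance}, whereas you supply a direct three-series argument with the case split on whether $f(i)\geq 1$ eventually; that part is fine and slightly more self-contained than the paper.

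There is, however, one small but real slip. Your assertion that $T_n \to \infty$ almost surely ``since $f$ is strictly positive'' is not valid in general: for $f(i)=2^i$ each birth process explodes in finite time, so $T_n$ accumulates at a finite limit. The paper handles this correctly by observing that almost-sure divergence of $\sum_j X^s_j$ forces $\sum_j X^{\ssup{1}}_j = \infty$ almost surely (if $\sum_j X^{\ssup{1}}_j$ converged a.s.\ then so would the i.i.d.\ copy $\sum_j X^{\ssup{2}}_j$, and hence their difference), which gives non-explosion and hence $\tau_n\to\infty$. Alternatively, under~\eqref{eq:div-variance} you can argue directly that $\sum_i 1/f(i)=\infty$ by exactly the same case split you already used: if $f(i)\geq 1$ eventually then $1/f(i)\geq 1/f(i)^2$, and otherwise $1/f(i)>1$ infinitely often.
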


\begin{rmq} \label{rem:pref-attach}
The result in Corollary~\ref{cor:conj} has implications for \emph{preferential attachment trees} - where nodes arrive one at a time and connect to an existing node with probability proportional to a function $f$ of their out-degree (the model considered in, for example, \cite{banerjee2021persistence, rudas}). In particular, it shows that, if the function $f$ satisfies~\eqref{eq:div-variance}, then, given any finite collection of nodes, one sees any possible ordering of these nodes when ordered by degree infinitely often in the evolution of the tree. Theorem~\ref{thm:main} has a similar implication for the genealogical trees of CMJ branching processes with independent increments (for example, the model considered in~\cite{iyer2024persistenthubscmjbranching}), as long as the random series $\sum_{j=1}^{\infty}X^{s}_{j}$ diverges almost surely, and the model is `non-explosive'.
\end{rmq}

\section{Proofs of results}\label{sec:proof}
For the proof of Theorem~\ref{thm:main}, we first prove a modified version of the result -- Proposition~\ref{prop:main-new} below. In this modified version, we always assume that the initial values $\left(v_{a}^{\text{in}}\right)_{a\in [A]}$ are all identically zero, i.e.,
\begin{equation}\label{eq:starting points at 0}
    v_{a}^{\text{in}} = 0 \quad \text{ for all } a \in [A].
\end{equation}
We make this assumption for the rest of Sections~\ref{subsec:propo proof} and~\ref{sec:propo proof}. In Section~\ref{sec:starting}, we use Proposition~\ref{prop:main-new} to prove Theorem~\ref{thm:main}.

\begin{prop} \label{prop:main-new}
Suppose that the random series $\sum_{j=1}^{\infty}X^{s}_{j}$ diverges almost surely and that $v_{a}^{\text{in}}=0$ for all $a \in [A]$. 
Then, almost surely, for any permutation $\pi\colon [A] \rightarrow [A]$, 
\begin{equation} \label{eq:perm-ordering}
\exists (t_{i})_{i \in \mathbb{N}} \in [0, \infty)^{\mathbb{N}}\colon \quad \lim_{i \to \infty} t_{i} = \infty \quad \text{and} \quad \forall i \in \mathbb{N} \quad 
v_{\pi(1)}(t_{i}) \geq v_{\pi(2)}(t_{i}) \geq \cdots \geq v_{\pi(A)}(t_{i}). 
\end{equation}
\end{prop}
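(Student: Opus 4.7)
The plan is to reduce the problem via exchangeability of the agents and then upgrade a positive-probability statement to full probability by an inductive argument built on the pairwise alternation result of~\cite{fixation-leadership-growth-processes}. Write $E_\pi$ for the event in~\eqref{eq:perm-ordering}.

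Since each sequence $(X^{\ssup{a}}_j)_j$ has the same law and all initial values are zero, the joint process $(v_a(\cdot))_{a \in [A]}$ is exchangeable under permutations of the agent labels. Hence $P(E_\pi)$ does not depend on $\pi$; call this common value $q$. At every positive integer time $n$ some permutation $\pi$ satisfies $v_{\pi(1)}(n) \geq \cdots \geq v_{\pi(A)}(n)$, and since there are only $A!$ permutations but infinitely many integer times, pigeonhole yields that almost surely at least one $E_\pi$ holds. Thus $P\!\left(\bigcup_\pi E_\pi\right) = 1$, and so $q \geq 1/A! > 0$. Note this reduction uses neither the divergence of $\sum X^{s}_j$ nor the pairwise result -- only exchangeability.

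The main task is then to show $q = 1$. A direct Hewitt--Savage-type 0-1 law is obstructed because the sequences $(X^{\ssup{a}}_j)_j$ are independent but not identically distributed in $j$, so finite permutations of a single agent's waiting-time sequence do not preserve the underlying law. Instead, I would proceed by induction on $A$, with base case $A = 2$ being the main theorem of~\cite{fixation-leadership-growth-processes} (which uses exactly the divergence hypothesis). For the inductive step, using exchangeability one may take $\pi$ to be the identity. Conditioning on $\sigma((X^{\ssup{a}})_{a \leq A-1})$, the inductive hypothesis applied to the subsystem of agents $1, \ldots, A-1$ produces almost surely an increasing sequence of random times $\tau_1 < \tau_2 < \cdots$ along which $v_1(\tau_i) \geq \cdots \geq v_{A-1}(\tau_i)$. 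It then remains to show that at infinitely many $\tau_i$ one additionally has $v_A(\tau_i) \leq v_{A-1}(\tau_i)$, using that $X^{\ssup{A}}$ is independent of the conditioning.

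The hard step is this last conditional infinitely-often claim. A sufficient condition is that the random walk $(T^{\ssup{A-1}}_k - T^{\ssup{A}}_k)_k$ is non-negative at the (random) indices $k_i := v_{A-1}(\tau_i)+1$ for infinitely many $i$; but the $\tau_i$ depend on $X^{\ssup{A-1}}$, which also enters that walk, so the successive events are intricately correlated and a naive Borel--Cantelli is not directly available. The divergence of $\sum X^{s}_j$ should enter here to guarantee that this random walk has unbounded oscillations in both signs, preventing $v_A$ from systematically leading $v_{A-1}$ at any conditionally-selected stopping-time sequence. Concretely, one expects to extract a subsequence $(\tau_{i_k})_k$ along which the relevant events become sufficiently decoupled -- perhaps by inserting, between the $\tau_{i_k}$'s, intermediate stopping times after which fresh, independent waiting-time increments of $X^{\ssup{A}}$ are available -- and to apply a conditional Borel--Cantelli lemma with a uniform positive lower bound on the conditional probabilities (the latter ultimately coming from the agent exchangeability between $A-1$ and $A$). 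Making this decoupling precise, without losing the inductive structure, is the main technical obstacle.
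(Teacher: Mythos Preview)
Your proposal has a genuine gap, precisely at the step you flag as the ``main technical obstacle.'' The reduction $q\geq 1/A!$ by exchangeability is correct but not the issue; the problem is upgrading to $q=1$. In your inductive step you condition on the entire trajectories of agents $1,\ldots,A-1$, i.e.\ on the infinite $\sigma$-algebra $\sigma\bigl((X^{\ssup{a}}_j)_{a\le A-1,\,j\ge 1}\bigr)$. After this conditioning, the exchangeability between agents $A-1$ and $A$ that you invoke for a ``uniform positive lower bound'' is no longer available: $X^{\ssup{A-1}}$ is now deterministic. Worse, the times $\tau_i$ are exactly those at which agent $A-1$ has the smallest value among $1,\ldots,A-1$, so $\tau_a(v_{A-1}(\tau_i))$ is biased large along this subsequence; asking the independent agent $A$ to be even slower at these adversarially selected indices is not controlled by the pairwise result, and your suggested decoupling by ``fresh increments'' does not neutralise this selection bias. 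There is no clear way to obtain a lower bound on $\Prob{v_A(\tau_i)\le v_{A-1}(\tau_i)\mid (X^{\ssup{a}})_{a\le A-1}}$ that is uniform enough for conditional Borel--Cantelli.

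The paper avoids induction entirely and handles all $A$ agents at once. The key quantitative input is a Kolmogorov--Rogozin-type dispersion bound (Theorem~\ref{thm:dispersion}): under the divergence hypothesis, via the three-series theorem, one has $\sum_j D(X^{s}_j;\lambda)=\infty$, hence the concentration function $Q(\tau_a(n);M)\to 0$ for every fixed $M$. This is used (together with a unimodality argument for the map $k\mapsto \Xi_\pi$ in a single coordinate) to show that bounded time shifts $s_a\in[0,M]$ become asymptotically invisible to the ordering, so that $\E{\Xi_\pi((v_a(t+s_a))_a)}\to 1/A!$ uniformly over such shifts (Proposition~\ref{prop:A factorial conv}). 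The crucial consequence is that, conditioning only on the \emph{finite} past $\mathcal{F}_\theta$, one gets $\Prob{\mathcal{E}_t\mid\mathcal{F}_\theta}\ge 1/(4A!)$ for all sufficiently large $t$, uniformly. L\'evy's conditional Borel--Cantelli then finishes. The contrast with your plan is that the paper conditions on a finite amount of information and uses dispersion to show the future forgets it, whereas you condition on an infinite $\sigma$-algebra and lose the symmetry needed for any lower bound.
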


\subsection{Proof of Proposition~\ref{prop:main-new}}\label{subsec:propo proof}
We write $S_A$ for the symmetric group on $[A]$. For a permutation $\pi:[A] \to [A]$ and a collection of integers $(M_a)_{a \in [A]}$, we define
\begin{equation*}
	\Xi_\pi \left( (M_a)_{a \in [A]} \right) = \frac{1}{\sum_{\rho \in S_A} \mathbbm{1} \left\{M_{\rho(1)} \geq M_{\rho(2)} \geq \ldots \geq M_{\rho(A)}\right\} } \mathbbm{1} \left\{M_{\pi(1)} \geq M_{\pi(2)} \geq \ldots \geq M_{\pi(A)}\right\}.
\end{equation*}
By definition, the normalising factor means that
\begin{align*}
	\sum_{\pi \in S_A} \Xi_\pi \left( (M_a)_{a \in [A]} \right) = 1.
\end{align*}
We will evaluate the function $\Xi_\pi$ at the values $\left(v_a(t)\right)_{a \in [A]}$. The following proposition shows that the long-term behaviour of the expectation of $\Xi_\pi\big( \left(v_a(t)\right)_{a \in [A]} \big)$ is not affected by initial time-shifts:

\begin{prop}\label{prop:A factorial conv}
	Let $\pi \in S_A$ and let $M>0$. Then
	\begin{equation}\label{limit t arbitrary starts}
		\lim_{t\to \infty} \ \sup_{s_1,\ldots,s_A \in [0,M]} \ \left| \mathbb{E} \left[ \Xi_\pi \left( \left(v_a(t+s_a)\right)_{a\in [A]} \right) \right] - \frac{1}{A!} \right| = 0 .
	\end{equation}
\end{prop}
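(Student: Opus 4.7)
My plan is to leverage the exchangeability of the $A$ agents for a clean baseline at equal time shifts, and then to quantify the perturbation due to unequal shifts using the divergence hypothesis on $\sum_j X^{s}_j$.

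First, I would observe that when $s_1 = \cdots = s_A = 0$, the random vector $(v_a(t))_{a \in [A]}$ is fully exchangeable: the sequences $(X^{\ssup{a}}_j)_{j \in \mathbb{N}}$ are i.i.d.\ across agents $a \in [A]$, and by~\eqref{eq:starting points at 0} all initial values vanish. Combined with the identity $\sum_{\pi \in S_A} \Xi_\pi \equiv 1$, this yields exactly $\mathbb{E}[\Xi_\pi((v_a(t))_a)] = \tfrac{1}{A!}$ for every $\pi \in S_A$ and every $t \geq 0$. Proving the proposition thus reduces to showing that the perturbation caused by unequal shifts vanishes uniformly:
\begin{equation*}
\sup_{s_1,\ldots,s_A \in [0, M]} \Big| \mathbb{E}[\Xi_\pi((v_a(t+s_a))_a)] - \mathbb{E}[\Xi_\pi((v_a(t))_a)] \Big| \to 0 \quad \text{as } t \to \infty.
\end{equation*}

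Second, since $\Xi_\pi \in [0, 1]$ depends on its input only through the induced order-and-tie signature, this difference is bounded by the probability that the signatures of $(v_a(t))_a$ and $(v_a(t+s_a))_a$ disagree. Writing, for each pair $(a, b)$,
\begin{equation*}
v_a(t+s_a) - v_b(t+s_b) = \left[v_a(t) - v_b(t)\right] + J_a(t, s_a) - J_b(t, s_b), \qquad J_c(t, s) := v_c(t+s) - v_c(t),
\end{equation*}
the signature on $(a, b)$ is preserved whenever $|v_a(t) - v_b(t)|$ strictly exceeds $J_a(t, M) + J_b(t, M)$. A union bound then reduces the problem to showing, for every pair $(a, b)$,
\begin{equation*}
\mathbb{P}\big( |v_a(t) - v_b(t)| \leq J_a(t, M) + J_b(t, M) \big) \to 0 \quad \text{as } t \to \infty.
\end{equation*}

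Third, splitting at a threshold $K = K(t)$ bounds this probability by $\mathbb{P}(J_a(t, M) + J_b(t, M) > K) + \mathbb{P}(|v_a(t) - v_b(t)| \leq K)$. The second term is controlled via a \emph{spreading} estimate: setting $T^{\ssup{c}}_n := \sum_{j=1}^n X^{\ssup{c}}_j$, the gap $T^{\ssup{a}}_n - T^{\ssup{b}}_n = \sum_{j=1}^n X^{s}_j$ is a random walk of independent symmetric increments whose partial sums do not converge almost surely, by the divergence hypothesis; this forces the scale of the walk to grow without bound in $n$, which translates via the inverse relation $v_c(t) = \max\{n : T^{\ssup{c}}_n \leq t\}$ into $\mathbb{P}(|v_a(t) - v_b(t)| \leq K) \to 0$ for each fixed $K$. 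The first term is handled by a tightness/moment estimate on the number of jumps of $v_c$ in a bounded time window. The hardest part, which I expect to be the main obstacle, is the quantitative spreading estimate: extracting from the \emph{qualitative} divergence of $\sum_j X^s_j$ a useful rate at which the distribution of $v_a(t) - v_b(t)$ escapes compact sets, while simultaneously controlling the potentially unbounded growth of $J_c(t, M)$ as $t \to \infty$. This requires careful passage between the count-indexed view ($T^{\ssup{c}}_n$) and the time-indexed view ($v_c(t)$), likely via a Kolmogorov three-series style argument applied to $\sum_j X^s_j$.
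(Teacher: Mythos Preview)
Your baseline observation and the reduction to a signature-change probability are correct, but the remaining plan has a genuine gap that you flag yet do not resolve. The increments $X_j$ are i.i.d.\ only across agents, not across $j$; nothing prevents $X_j \to 0$ in distribution. In the balls-in-bins example with $f(j)=\sqrt{j}$ (which satisfies~\eqref{eq:div-variance}), one has $J_c(t,M)\asymp t$, so $J_c(t,M)$ is not tight and your splitting at a fixed threshold $K$ cannot work. Taking $K=K(t)\to\infty$ would require a quantitative comparison between the growth of $J_c(t,M)$ and the spreading of $|v_a(t)-v_b(t)|$, and the merely qualitative divergence of $\sum_j X^s_j$ does not furnish this. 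Your earlier sentence ``handled by a tightness/moment estimate on the number of jumps in a bounded time window'' is thus in tension with your own final paragraph, and it is the final paragraph that correctly identifies the obstacle.

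The paper sidesteps this comparison entirely via a different decomposition. Rather than bounding a total-variation-type signature-change probability, it alters one agent's shift at a time and observes that, conditionally on everything except $\tau_b(n)$ for a fixed level $n$, the functional $\Xi_\pi$ is \emph{unimodal} as a function of $\tau_b(n)$. A short lemma on unimodal test functions then bounds the effect of the shift $s_b$ by the L\'evy concentration function $Q(\tau_b(n);s_b)$, and Theorem~\ref{thm:dispersion} combined with~\eqref{eq:div-imp-d-sum-div} gives $Q(\tau_b(n);M)\to 0$ as $n\to\infty$ directly---no passage between the count-indexed and time-indexed pictures is needed. The unimodality of $\Xi_\pi$ in each coordinate is the structural ingredient your approach is missing; it replaces the crude signature-change bound by one tailored to $\Xi_\pi$ and thereby reduces the problem to a one-dimensional dispersion estimate on the partial sums $\tau_b(n)$.
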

As $\Xi_\pi \left( \left(v_a(t+s_a)\right)_{a\in [A]} \right) \leq  \mathbbm{1} \left\{ v_{\pi(1)}(t+s_{\pi(1)}) \geq \ldots \geq v_{\pi(A)}(t+s_{\pi(A)}) \right\}$, the above proposition directly implies the following.

\begin{cor}\label{cor convergence}
    Let $\pi \in S_A$, let $M>0$, and let $s_1,\ldots,s_A \in [0,M]$. Then
	\begin{equation}
		\liminf_{t\to \infty} \ \inf_{s_1,\ldots,s_A \in [0,M]}  \ \mathbb{P} \left(  v_{\pi(1)}(t+s_{\pi(1)}) \geq \ldots \geq v_{\pi(A)}(t+s_{\pi(A)})  \right) \geq  \frac{1}{A!}  .
	\end{equation}
\end{cor}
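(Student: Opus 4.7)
The plan is to exploit the pointwise inequality that the paper has already flagged just above the corollary statement, and then quote Proposition~\ref{prop:A factorial conv}. By the definition of $\Xi_\pi$, this function vanishes unless $\pi$ is compatible with a weak ordering of its arguments; and when it is non-zero it equals the reciprocal of the number of permutations compatible with that ordering, so in particular it is bounded above by $1$. Applied to the random values $v_a(t+s_a)$, I therefore have pointwise
\begin{equation*}
\Xi_\pi\bigl((v_a(t+s_a))_{a \in [A]}\bigr) \leq \mathbbm{1}\bigl\{v_{\pi(1)}(t+s_{\pi(1)}) \geq \cdots \geq v_{\pi(A)}(t+s_{\pi(A)})\bigr\}.
\end{equation*}

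I would then take expectations of both sides, noting that this domination holds uniformly in $s_1,\ldots,s_A \in [0,M]$, to obtain
\begin{equation*}
\mathbb{E}\bigl[\Xi_\pi\bigl((v_a(t+s_a))_{a \in [A]}\bigr)\bigr] \leq \mathbb{P}\bigl(v_{\pi(1)}(t+s_{\pi(1)}) \geq \cdots \geq v_{\pi(A)}(t+s_{\pi(A)})\bigr).
\end{equation*}
Given $\varepsilon > 0$, Proposition~\ref{prop:A factorial conv} furnishes a time $T = T(\varepsilon)$ such that for all $t \geq T$ and all admissible $(s_a) \in [0,M]^A$, the left-hand side above exceeds $1/A! - \varepsilon$. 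Taking the infimum over $(s_a) \in [0,M]^A$ and then the liminf as $t \to \infty$, and finally letting $\varepsilon \to 0$, yields the claimed lower bound of $1/A!$.

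There is essentially no obstacle here: the entire analytical content is packaged inside Proposition~\ref{prop:A factorial conv}. The role of $\Xi_\pi$ in the argument is precisely to redistribute unit mass symmetrically across all orderings compatible with a given value configuration, so that the uniform convergence of its expectation to $1/A!$ transfers, via the obvious domination by a single-ordering indicator, into the desired uniform lower bound on the ordering probability.
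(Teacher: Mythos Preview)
Your proposal is correct and matches the paper's own approach exactly: the paper simply notes the pointwise inequality $\Xi_\pi \leq \mathbbm{1}\{\text{ordering}\}$ and says Proposition~\ref{prop:A factorial conv} directly implies the corollary. You have spelled out the one-line deduction in slightly more detail, but there is no difference in strategy.
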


We defer the proof of Proposition \ref{prop:A factorial conv} to Section~\ref{sec:propo proof}, first using it to prove Proposition~\ref{prop:main-new}. In what follows, we define $\tau_{a}(n)$ to be the time taken for agent $a \in [A]$ to reach value $n$, i.e. 
\begin{equation} \label{eq:stopping-time}
    \tau_{a}(n) := \inf\left\{t \geq 0\colon v_{a(t)}(t) \geq n \right\} = \begin{cases}
\sum_{j= 1}^{n} X^{\ssup{a}}_{j} & \text{if $n \in \mathbb{N}$} \\
0 & \text{otherwise. }
    \end{cases}    
\end{equation}
We also define the increasing sequence of $\sigma$-algebras $(\mathcal{F}_t)_{t\geq 0}$ by 
\[
    \mathcal{F}_{t} := \sigma(v_a(s) \colon a \in [A], s \leq t).
\]

\begin{proof}[Proof of Proposition~\ref{prop:main-new}]
It suffices to prove that almost surely, Equation~\eqref{eq:perm-ordering} is satisfied for the trivial permutation $\pi(i) \equiv i$. This implies by symmetry that Equation~\eqref{eq:perm-ordering} is almost surely satisfied for any given permutation, and thus, taking the intersection over the finitely many permutations possible, the result follows. Define the event $\mathcal{E}_t$ by
    \[
    \mathcal{E}_t := \left\{ v_1(t) \geq v_2(t) \geq \ldots \geq v_A(t) \right\} .
    \]
We start with the following claim:
    \begin{clm} \label{clm-1}
    For all $\theta \in [0, \infty)$, there exists an $\mathcal{F}_{\theta}$-measurable and almost surely finite random variable $Z$ such that 
    \begin{equation*}
        \mathbb{P} \left( \mathcal{E}_t | \mathcal{F}_{\theta} \right) \geq \frac{1}{4A!}, \quad \text{almost surely}.
    \end{equation*}
    for all $t \geq Z$. In particular, for all $\theta \in [0,\infty)$,
    \begin{equation} \label{eq:lim-next-time}
        \lim_{t \to \infty} \mathbb{P} \left( \mathbb{P} \left( \mathcal{E}_{t} | \mathcal{F}_{\theta} \right) < \frac{1}{4A!} \right) = 0 .
    \end{equation}
    \end{clm}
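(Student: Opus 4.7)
The plan is to prove a conditional version of Corollary~\ref{cor convergence} via a Markov-like decomposition of the process after time $\theta$. For each $a \in [A]$, let $V_a := v_a(\theta)$ and $\sigma_a := \tau_a(V_a+1) - \theta$ be the residual waiting time until agent $a$'s next jump after $\theta$; this is a.s.~finite. Define the ``continuation'' process $\tilde v_a(s) := v_a(\theta + \sigma_a + s) - V_a - 1$ for $s \geq 0$. The crucial distributional fact is that, conditional on $\mathcal{F}_\theta$, the $\tilde v_a$'s are independent copies of the canonical growth process (since they use the fresh i.i.d.~tail $(X^{\ssup{a}}_{V_a+2+j})_{j \geq 0}$) and jointly independent of $(\sigma_a)_a$; moreover, we have the decomposition $v_a(\theta + s) = V_a + 1 + \tilde v_a(s - \sigma_a)$ for $s \geq \sigma_a$.

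Next, since each $\sigma_a$ is a.s.~finite, one can pick an $\mathcal{F}_\theta$-measurable $M > 0$ with $\mathbb{P}(\max_a \sigma_a \leq M \mid \mathcal{F}_\theta) \geq 1 - \eta$ for a small $\eta > 0$ to be chosen later. On this event the shifts $\sigma_a$ lie in $[0, M]$, and applying Corollary~\ref{cor convergence} to the canonical processes $\tilde v_a$ with shifts $M-\sigma_a \in [0,M]$ gives, for $t$ large depending on $\mathcal{F}_\theta$ and any permutation $\pi$,
\[
    \mathbb{P}\bigl(\tilde v_{\pi(1)}(t-\theta-\sigma_{\pi(1)}) \geq \cdots \geq \tilde v_{\pi(A)}(t-\theta-\sigma_{\pi(A)}) \bigm| \mathcal{F}_\theta\bigr) \geq \frac{1}{A!} - 2\eta.
\]

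The main obstacle is reconciling the integer offsets $V_a$: since $v_a(t) = V_a + 1 + \tilde v_a(t-\theta-\sigma_a)$, the event $\mathcal{E}_t$ requires not only a specific ordering of the $\tilde v_a$'s, but also gaps $\tilde v_i(\cdot) - \tilde v_j(\cdot) \geq V_j - V_i$ for those pairs with $V_j > V_i$. I would handle this by combining the above bound with a ``gap'' argument using the divergence hypothesis $\sum X^{s}_j = \infty$: this forces the differences $\tilde v_a - \tilde v_{a+1}$ to have unbounded conditional oscillations, so for $t$ large enough (again, depending on $\mathcal{F}_\theta$ through $M$ and $W := \max_b V_b - \min_b V_b$), one can enforce strict gaps of size at least $W$ between consecutive $\tilde v$-values with positive conditional probability -- itself a consequence of a further application of Corollary~\ref{cor convergence}, to a shifted index set realising the gaps as an ordering event in a larger system. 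Choosing $\eta$ small enough, the various error terms are absorbed into the factor $1/(4A!)$ (versus the asymptotically sharper $1/A!$). The ``in particular'' statement then follows immediately: the threshold $Z$ is $\mathcal{F}_\theta$-measurable and a.s.~finite, so $\mathbb{P}(Z > t) \to 0$, and on $\{Z \leq t\}$ the conditional probability is at least $1/(4A!)$ almost surely, giving $\mathbb{P}(\mathbb{P}(\mathcal{E}_t \mid \mathcal{F}_\theta) < 1/(4A!)) \leq \mathbb{P}(Z > t) \to 0$.
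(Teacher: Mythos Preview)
Your decomposition contains a genuine error. The waiting times $(X^{\ssup{a}}_j)_{j\in\mathbb{N}}$ are independent across $j$ but \emph{not} identically distributed in $j$ (in the balls-in-bins application, for instance, $X^{\ssup{a}}_j\sim\Exp{f(j-1)}$). Your continuation process $\tilde v_a$ uses the tail $(X^{\ssup{a}}_{V_a+2+j})_{j\ge 0}$, and since $V_a=v_a(\theta)$ differs across agents, the processes $\tilde v_1,\ldots,\tilde v_A$ are independent but \emph{not} identically distributed conditional on $\mathcal{F}_\theta$. They are therefore not ``independent copies of the canonical growth process'', and Corollary~\ref{cor convergence} (which rests on exchangeability of the agents) does not apply to them. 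This is not just a technicality: without exchangeability there is no reason the limiting conditional probability of any fixed ordering should be near $1/A!$.

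The paper's proof sidesteps both this issue and your ``integer offset'' obstacle with one move: set $N=\max_{a}v_a(\theta)$ and wait until \emph{every} agent reaches the common level $N+1$. The post-$N{+}1$ processes $w_a(s)=\sum_{j\ge N+2}\mathbf{1}\{\sum_{i=N+2}^{j}X^{\ssup{a}}_i\le s\}$ then all use the same index range $j\ge N+2$, so conditional on $\mathcal{F}_\theta$ they are genuinely i.i.d.\ and Corollary~\ref{cor convergence} applies directly; moreover $v_a(s+\tau_a(N+1))=N+1+w_a(s)$ with the \emph{same} additive constant for every $a$, so an ordering of the $w_a$'s is exactly an ordering of the $v_a$'s and no gap argument is needed. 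The only randomness to control is $\max_a\tau_a(N+1)$, handled by choosing an $\mathcal{F}_\theta$-measurable $M$ with $\mathbb{P}(\max_a\tau_a(N+1)\le M\mid\mathcal{F}_\theta)>1/2$; the product with the $1/(2A!)$ coming from Corollary~\ref{cor convergence} gives the stated $1/(4A!)$. Your vague ``gap argument'' via a larger auxiliary system is thus both unnecessary and, as written, not a proof.
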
 
    \begin{proof}
        Let $N = \max_{a \in [A]} v_a(\theta)$. Note that, since each of the values of $X^{\ssup{a}}_{i}$ are almost surely finite, $\max_{a \in [A]} \tau_a(N+1) < \infty$ almost surely. Therefore, let $M$ be sufficiently large that
        \begin{equation*}
            \mathbb{P} \left( \max_{a \in [A]} \tau_a(N+1) \leq M \big| \mathcal{F}_\theta \right) > \frac{1}{2}.
        \end{equation*}
        Note that one can choose $M$ measurable with respect to $\mathcal{F}_\theta$, and $M<\infty$ almost surely.
        Define $w_a(s)$ by 
        \begin{equation*}
            w_a(s) = \sum_{j=N+2}^{\infty} \mathbbm{1}_{\left\{ s \geq \sum_{i=N+2}^{j} X_i^{\ssup{a}}  \right\} }.
        \end{equation*}
        The collection $\left( w_a(s) \right)_{a\in [A], s \geq 0}$ is also a collection of competing birth processes, 
        where $A$ agents have a value $w_a\colon \left[0,\infty\right) \to \mathbb{N}_0$ and the time taken for the value of agent $a$ to go from $k$ to $k+1$ is given by $X_{N+2+k}^{\ssup{a}}$. Since $\sum_{k=N+2}^{\infty}X_k^{s}$ diverges almost surely, we can use the results of Corollary~\ref{cor convergence} for $\left( w_a(s) \right)_{a\in [A], s \geq 0}$. In particular, this implies that we can choose $Z>M$ sufficiently large such that
        \begin{equation*}
            \inf_{s_1,\ldots,s_A \in [0,M]} \  \mathbb{P} \left(  w_1(t+s_1) \geq \ldots \geq w_A(t+s_A) \big| \mathcal{F}_\theta  \right) \geq  \frac{1}{2A!}
        \end{equation*}
        for all $t \geq Z$. The dependence on $\mathcal{F}_\theta$ in the above conditional probability comes from the dependence of $(w_a(t+s_A))_{a \in [A]}$ on $N$. Note that by definition $v_a(\cdot)$ and $w_a(\cdot)$ satisfy 
        \begin{equation*}
            v_a \left( s + \tau_{a} (N+1) \right) = N+1 + w_a(s).
        \end{equation*}
        Since the $\sigma$-algebra $\mathcal{F}_{\theta}$ does not contain any information about $\left(X_j^{\ssup{a}} : j \geq N+2 , a \in [A] \right)$, we see that for $t > Z$ one has
        \begin{align*}
            & \mathbb{P} \left( \mathcal{E}_{t+M} \big| \mathcal{F}_{\theta} \right) \geq \mathbb{P} \left( \max_{a \in [A]} \tau_a(N+1) < M , v_1(t+M) \geq \ldots \geq v_A(t+M) \big| \mathcal{F}_{\theta} \right) 
            \\
            &
            = 
            \mathbb{P} \left( \max_{a \in [A]} \tau_a(N+1) < M , w_1(t+M - \tau_1(N+1)) \geq \ldots \geq w_A(t+M - \tau_A(N+1) ) \big| \mathcal{F}_{\theta} \right) 
            \\
            &
            \geq
            \inf_{s_1,\ldots,s_A \in [0,M]}
            \mathbb{P} \left( \max_{a \in [A]} \tau_a(N+1) < M , w_1(t + s_1) \geq \ldots \geq w_A(t + s_A ) \big| \mathcal{F}_{\theta} \right) 
            \\
            &
            =
            \left(
            \mathbb{P} \left( \max_{a \in [A]} \tau_a(N+1) < M  \big| \mathcal{F}_{\theta} \right) \right) \left( \inf_{s_1,\ldots,s_A \in [0,M]}
            \mathbb{P} \left( w_1(t + s_1) \geq \ldots \geq w_A(t + s_A ) \big| \mathcal{F}_{\theta} \right) \right) 
            \\
            &
            \geq \frac{1}{2} \cdot \frac{1}{2 A!} = \frac{1}{4A!}, \quad \text{almost surely,}
        \end{align*}
        where, in the second to last line, we use the conditional independence of the associated random variables given $\mathcal{F}_{\theta}$. 
    \end{proof}
    Choose $t_1 = 1$. Given $t_k$, by applying Equation~\eqref{eq:lim-next-time} from Claim~\ref{clm-1}, we choose $t_{k+1} \geq t_k+1$ such that
    \begin{equation}\label{eq:ksquared}
        \mathbb{P}  \left( \mathbb{P} \left( \mathcal{E}_{t_{k+1}} \big| \mathcal{F}_{t_k} \right) < \frac{1}{4A!} \right) < \frac{1}{k^2}.
    \end{equation}
    Since $t_{k+1} \geq t_k + 1$, this immediately implies that $\lim_{k\to \infty} t_k = \infty$.
    Thus, $\sum_{k=1}^{\infty} \mathbb{P}\left( \mathbb{P} \left( \mathcal{E}_{t_{k+1}}| \mathcal{F}_{t_k} \right) \right) < \infty$ so that the first Borel-Cantelli lemma implies
    \begin{equation*}
        \mathbb{P}  \left( \mathbb{P} \left( \mathcal{E}_{t_{k+1}} | \mathcal{F}_{t_k} \right) < \frac{1}{4A!} \text{ for infinitely many } k \in \mathbb{N} \right) = 0 .
    \end{equation*}
    Consequently, 
    \begin{equation}\label{eq:infinite sum}
        \mathbb{P}  \left( \sum_{k=1}^{\infty} \Prob{\mathcal{E}_{t_{k+1}} |\mathcal{F}_{t_k}} = \infty \right) = 1. 
    \end{equation}
    L\'evy's extension of the Borel-Cantelli lemma, see \cite[Theorem 12.15, page 124]{williams1991probability}, now implies that
    \begin{equation*}
        \mathbb{P}  \left(  \mathcal{E}_{t_{k}} \text{ for infinitely many } k \in \mathbb{N} \right) =  1,
    \end{equation*}
    completing the proof of Proposition~\ref{prop:main-new}.
\end{proof}

\subsection{Proof of Proposition \ref{prop:A factorial conv}}\label{sec:propo proof}

A key tool for the proof of Proposition~\ref{prop:A factorial conv} is the inequality stated in Theorem~\ref{thm:dispersion} below.  This inequality gives quantitative bounds on the dispersion of random walks with independent increments. For a real-valued random variable $Y$, we define
\[
D(Y;\lambda) := \frac{1}{\lambda^{2}} \E{Y^{2} \mathbf{1}_{|Y|\leq \lambda}} + \Prob{|Y| \geq \lambda}. 
\]
and 
\[
Q(Y;\lambda) := \sup_{x \in \mathbb{R}} \Prob{x \leq Y \leq x+ \lambda}. 
\]

Theorem~\ref{thm:dispersion} is a known result from~\cite{petrov1995limit}, slightly reformulated and simplified for our purpose:

\begin{thm}[{\cite[Theorem~2.14, page 64]{petrov1995limit}}] \label{thm:dispersion}
    Let $Y_{1}, \ldots, Y_{n}$ be independent real-valued random variables, and $S_{n} := \sum_{i=1}^{n} Y_{i}$. Let $\lambda > 0$ be given. Then, there exists an absolute constant $B > 0$ such that
    \begin{equation} \label{eq:conc-bound}
    Q(S_{n}, \lambda) \leq B \left(\sum_{k=1}^{n} D(Y^{s}_{k}; \lambda)\right)^{-1/2}. 
    \end{equation}
\end{thm}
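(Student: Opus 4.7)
This is the classical Esseen--Kolmogorov--Rogozin concentration inequality, and my plan follows the characteristic-function method used in Petrov's monograph. Throughout, $\phi_X(t):=\mathbb{E}[e^{itX}]$ denotes the characteristic function of a random variable $X$.

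The first two steps are standard. Esseen's smoothing inequality bounds the concentration function by an integral of the characteristic function:
\[
Q(S_n;\lambda)\;\leq\;C\lambda\int_{-1/\lambda}^{1/\lambda}|\phi_{S_n}(t)|\,dt\;=\;C\lambda\int_{-1/\lambda}^{1/\lambda}\prod_{k=1}^{n}|\phi_{Y_k}(t)|\,dt
\]
for an absolute constant $C$, using independence of the $Y_k$ in the second equality. Next, the symmetrisation identity $|\phi_{Y_k}(t)|^{2}=\phi_{Y_k^s}(t)$ shows that each $\phi_{Y_k^s}(t)$ is real and lies in $[0,1]$, so the elementary bound $u\leq e^{-(1-u)}$ on $[0,1]$ combined with the product structure yields the pointwise estimate
\[
|\phi_{S_n}(t)|\;\leq\;\exp\!\left(-\tfrac{1}{2}\sum_{k=1}^{n}\bigl(1-\phi_{Y_k^s}(t)\bigr)\right).
\]

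The core step, and the main obstacle, is to show that the exponent $\psi(t):=\sum_{k=1}^{n}\bigl(1-\phi_{Y_k^s}(t)\bigr)$ is comparable to $(t\lambda)^{2}\,T$ on $|t|\leq 1/\lambda$, where $T:=\sum_{k=1}^{n}D(Y_k^s;\lambda)$, in a form strong enough to feed back into the exponential bound. For a symmetric $Z$, one writes $1-\phi_Z(t)=\mathbb{E}[1-\cos(tZ)]$ and splits according to $\{|Z|\leq\lambda\}$ and $\{|Z|>\lambda\}$. On the first event the bound $1-\cos(u)\geq u^{2}/4$ for $|u|\leq 1$ produces a term proportional to $(t\lambda)^{2}\lambda^{-2}\mathbb{E}[Z^{2}\mathbf{1}_{|Z|\leq\lambda}]$ pointwise in $t$. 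The tail term $\mathbb{P}(|Z|\geq\lambda)$ is the delicate part, because $1-\cos(tz)$ can vanish at individual values of $t$ even when $|z|$ is very large (for example $tz=2\pi$), so no pointwise lower bound can capture it directly. The standard way around this is the averaged identity
\[
\int_{-1/\lambda}^{1/\lambda}\bigl(1-\cos(tz)\bigr)\,dt\;=\;\frac{2}{\lambda}\left(1-\frac{\sin(z/\lambda)}{z/\lambda}\right)\;\geq\;\frac{1}{\lambda}\,\mathbf{1}_{\{|z|\geq 2\lambda\}},
\]
together with an interpolation argument that upgrades the integrated inequality to an exponent lower bound usable inside $\exp(-\psi(t)/2)$; this is exactly the content of the preparatory Lemma~2.13 in~\cite{petrov1995limit}, and tracing constants through it is the main technical work.

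Once this lower bound on $\psi$ is in hand, substituting it into the exponential estimate and performing the change of variables $u=t\lambda\sqrt{T}$ gives
\[
\int_{-1/\lambda}^{1/\lambda}|\phi_{S_n}(t)|\,dt\;\leq\;\int_{-1/\lambda}^{1/\lambda}\exp\!\bigl(-\tfrac{c}{2}\,(t\lambda)^{2}\,T\bigr)\,dt\;\leq\;\frac{C'}{\lambda\sqrt{T}},
\]
a standard Gaussian tail estimate. Combining with the prefactor $C\lambda$ from Esseen yields $Q(S_n;\lambda)\leq CC'\,T^{-1/2}$, which is the claim with $B:=CC'$. As indicated, the principal obstacle is extracting the tail contribution to the exponent from an averaged rather than pointwise identity; Esseen smoothing and the final Gaussian integral are routine once that input is established.
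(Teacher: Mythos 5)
First, a point of orientation: the paper does not prove this statement at all. It is imported (with minor reformulation) from Petrov's monograph, and the authors only remark that the proof there uses ``analytic methods to bound the absolute values of characteristic functions.'' So your proposal can only be measured against the classical proof in the cited reference, not against anything in the paper. Your skeleton is indeed that classical proof: Esseen smoothing, factorisation of the characteristic function over independent summands, the symmetrisation identity $|\phi_{Y_k}(t)|^2=\phi_{Y_k^s}(t)$, the elementary bound $u\le e^{-(1-u)}$ on $[0,1]$, and a final Gaussian integral. The individual estimates you state are all correct, including the averaged identity for $\int_{-1/\lambda}^{1/\lambda}(1-\cos(tz))\,\dd t$ and its lower bound by $\lambda^{-1}\mathbbm{1}_{\{|z|\ge 2\lambda\}}$.

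There is, however, a genuine gap exactly where you locate the main technical work, and it is not closed. You correctly observe that no pointwise-in-$t$ lower bound of the form $\psi(t)\gtrsim (t\lambda)^2\sum_k\mathbb{P}(|Y_k^s|\ge\lambda)$ can hold (a symmetric two-point law at $\pm 2\pi/t$ kills it), so that only a $t$-averaged lower bound on the tail contribution to the exponent is available. But an averaged lower bound on a nonnegative function $\psi$ does not by itself control $\int_{-1/\lambda}^{1/\lambda}e^{-\psi(t)/2}\,\dd t$: a function that is enormous on half the interval and zero on the other half has a large average while the integral of $e^{-\psi/2}$ remains of order $1/\lambda$, which would destroy the claimed $T^{-1/2}$ decay. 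Consequently the sentence ``Once this lower bound on $\psi$ is in hand, substituting it into the exponential estimate\ldots'' and the final display quietly reinstate the pointwise bound $\psi(t)\ge c(t\lambda)^2T$ that you have just explained is unavailable. The ``interpolation argument'' meant to bridge this is named but never stated or proved, and it is precisely the non-routine content of the theorem; any honest completion must exploit the specific structure of $\psi$ — for instance the doubling property $1-\cos(k\theta)\le k^2(1-\cos\theta)$, or a case distinction according to whether the truncated-variance part or the tail part of $\sum_k D(Y_k^s;\lambda)$ dominates, of which only the first case reduces to the pointwise Gaussian computation you perform. As a map of the proof and a diagnosis of its one hard step, your proposal is accurate; as a proof, that step is missing.
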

We remark that Theorem~\ref{thm:dispersion} is proved by using analytic methods to bound the absolute values of characteristic functions of the associated random variables. We also recall the well-known criteria providing necessary and sufficient conditions for a series of independent random variables to converge:
\begin{thm}[Kolmogorov three series theorem, e.g.{~\cite[Theorem~2.5.8., page~85]{durrett}}] 
 For a sequence of mutually independent random variables $(S_{j})_{j \in \mathbb{N}}$, let $C > 0$ be given. Then the series $\sum_{j=1}^{\infty} S_{j}$ converges almost surely if and only if
 \begin{equation} \label{eq:kolmogorov-three-series}
     \sum_{j=1}^{\infty} \Prob{\left|S_{j}\right| > C} < \infty, \quad \sum_{j=1}^{\infty} \E{S_{j} \mathbbm{1}_{\left|S_{j}\right| \leq C}} < \infty,  \quad \text{ and } \quad \sum_{j=1}^{\infty} \var{S_{j} \mathbbm{1}_{\left|S_{j}\right| \leq C}} < \infty .
 \end{equation}
\end{thm}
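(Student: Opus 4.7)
The plan is to combine an exact symmetry identity at the base time $t$ with a perturbation argument showing that, with high probability, time shifts of size at most $M$ cannot disturb the ordering of $(v_a(\cdot))_{a\in[A]}$. Since the sequences $(X^{\ssup{a}}_j)_j$ are i.i.d.\ across $a$ and $v^{\text{in}}_a = 0$ for all $a$, the joint distribution of $(v_a(t))_{a\in[A]}$ is exchangeable; combined with $\sum_{\pi \in S_A} \Xi_\pi \equiv 1$ this immediately gives $\mathbb{E}[\Xi_\pi((v_a(t))_a)] = 1/A!$ exactly for every $\pi$ and every $t\geq 0$. It therefore suffices to show
\[
\sup_{s \in [0,M]^A} \Big| \mathbb{E}[\Xi_\pi((v_a(t+s_a))_a)] - \mathbb{E}[\Xi_\pi((v_a(t))_a)] \Big| \xrightarrow[t\to\infty]{} 0 .
\]

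To that end, for $K \in \mathbb{N}$ I introduce the favourable event (depending only on $t$, not on $s$)
\[
\mathcal{G}_K(t) := \Big\{ \min_{a\neq b} |v_a(t) - v_b(t)| > 2K \Big\} \cap \Big\{ \max_{a \in [A]} (v_a(t+M) - v_a(t)) \leq K \Big\}.
\]
On $\mathcal{G}_K(t)$ each increment $v_a(t+s_a) - v_a(t)$ lies in $[0,K]$ for any $s_a \in [0,M]$, so $|(v_a(t+s_a) - v_b(t+s_b)) - (v_a(t) - v_b(t))| \leq K$ for all $a \neq b$, which is strictly less than $|v_a(t) - v_b(t)|$; in particular both $(v_a(t))_a$ and $(v_a(t+s_a))_a$ have no ties and agree in their strict ordering, so $\Xi_\pi((v_a(t+s_a))_a) = \Xi_\pi((v_a(t))_a)$. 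Since $\Xi_\pi \in [0,1]$, the supremum above is bounded by $\mathbb{P}(\mathcal{G}_K(t)^c)$, and the task reduces to choosing $K=K(t) \to \infty$ slowly enough that $\mathbb{P}(\mathcal{G}_K(t)^c) \to 0$.

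Of the two pieces of $\mathcal{G}_K(t)^c$, the jump-count term is the easier: conditionally on $v_a(t)=n$ the waiting times $X^{\ssup{a}}_{n+2},X^{\ssup{a}}_{n+3},\ldots$ are i.i.d.\ copies of $X^{\ssup{a}}_1$ independent of $\mathcal{F}_t$, and $t - \tau_a(n) < X^{\ssup{a}}_{n+1}$, which after a short calculation yields $\mathbb{P}(v_a(t+M) - v_a(t) > K) \leq \mathbb{P}(\sum_{j=1}^{K} X^{\ssup{a}}_j \leq M)$; since $X^{\ssup{a}}_1 \not\equiv 0$ (else $\sum_j X_j^s$ would vanish), the right-hand side tends to $0$ as $K \to \infty$ by the strong law of large numbers, uniformly in $t$. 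For the pairwise-gap term, writing $p(t,n) := \mathbb{P}(v_a(t) = n)$ and using independence across agents, a Cauchy--Schwarz argument in $n$ gives, for every integer $k$,
\[
\mathbb{P}(v_a(t) - v_b(t) = k) = \sum_n p(t,n+k)\,p(t,n) \leq \sum_n p(t,n)^2 = \mathbb{P}(v_a(t) = v_b(t)),
\]
so $\mathbb{P}(|v_a(t) - v_b(t)| \leq 2K) \leq (4K+1)\,\mathbb{P}(v_a(t)=v_b(t))$, and everything reduces to showing $\mathbb{P}(v_a(t)=v_b(t)) \to 0$ and then choosing $K(t)$ with $K(t)\,\mathbb{P}(v_a(t)=v_b(t)) \to 0$.

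The hard part will be this last decay, which is where Theorem~\ref{thm:dispersion} is used. On $\{v_a(t) = v_b(t) = n\}$ the inclusions $t - \tau_a(n) \in [0, X^{\ssup{a}}_{n+1})$ and $t - \tau_b(n) \in [0, X^{\ssup{b}}_{n+1})$ force $|\tau_a(n) - \tau_b(n)| < X^{\ssup{a}}_{n+1} + X^{\ssup{b}}_{n+1}$; thresholding at a level $L$ and exploiting the independence of the $(n{+}1)$-st waiting times from $\tau_a(n) - \tau_b(n) = \sum_{j=1}^n X_j^{s}$ gives
\[
p(t,n)^2 \leq Q\Big(\sum_{j=1}^n X_j^{s};\, 2L\Big) + \mathbb{P}\big(X^{\ssup{a}}_{1} + X^{\ssup{b}}_{1} \geq L\big).
\]
Because $\sum_j X_j^s$ is symmetric and a.s.\ divergent, the Kolmogorov three-series theorem forces $D(X^s;\lambda) > 0$ for every $\lambda > 0$ (else $X^s \equiv 0$, contradicting divergence), so Theorem~\ref{thm:dispersion} bounds the first summand by $B/\sqrt{n\, D(X^s;2L)}$, which vanishes as $n \to \infty$. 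Splitting $\sum_n p(t,n)^2 \leq \mathbb{P}(v_a(t) < n_0) + \sum_{n \geq n_0} p(t,n)^2$, using $\mathbb{P}(v_a(t) < n_0) \to 0$ as $t \to \infty$ (because $\sum_j X_j^{\ssup{a}} = \infty$ a.s., else $\sum_j X_j^s$ would converge, so $v_a(t) \to \infty$ a.s.), and sending $L \to \infty$, then $n_0 \to \infty$, then $t \to \infty$, in that order, yields $\mathbb{P}(v_a(t)=v_b(t)) \to 0$. The main delicacy is to sequence these three limits correctly and then to pick $K(t)$ matching the resulting decay rate to close the argument.
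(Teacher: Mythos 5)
First, a mismatch of targets: the statement you were asked to prove is Kolmogorov's three-series theorem, which the paper simply quotes from Durrett without proof (it is only used to derive the equivalence \eqref{eq:div-imp-d-sum-div}); your write-up does not address it at all. What you have actually written is an attempted proof of Proposition~\ref{prop:A factorial conv}, so I will assess it as such. Your opening observation is correct and matches the paper's starting point: exchangeability of $(v_a(t))_{a\in[A]}$ together with $\sum_{\pi}\Xi_\pi\equiv 1$ gives $\E{\Xi_\pi\left((v_a(t))_{a\in[A]}\right)}=1/A!$, so everything reduces to controlling the effect of the shifts $s_a$. The trouble begins with the perturbation step. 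Within a single agent's sequence the increments $X^{\ssup{a}}_j$ are independent but \emph{not} identically distributed in $j$ (only the sequences are identically distributed across agents; in the balls-in-bins application $X^{\ssup{a}}_j\sim\Exp{f(j-1)}$ depends on $j$). Hence the waiting times beyond level $n$ are not ``i.i.d.\ copies of $X^{\ssup{a}}_1$'': your SLLN bound on the number of jumps in $[t,t+M]$ fails (for $f(i)=\sqrt{i}$ that number grows like $tM$, so no fixed or slowly growing $K$ controls it), and the tail term you write as $\Prob{X^{\ssup{a}}_1+X^{\ssup{b}}_1\ge L}$ should be $\Prob{X^{\ssup{a}}_{n+1}+X^{\ssup{b}}_{n+1}\ge L}$, which need not be small uniformly in $n$.

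The deeper, unfixable problem is that your whole strategy rests on showing $\Prob{v_a(t)=v_b(t)}\to 0$, and this is false under the hypotheses of the proposition. Take $\Prob{X^{\ssup{a}}_j=2^j}=\Prob{X^{\ssup{a}}_j=0}=1/2$, independently. Then $\Prob{|X^s_j|>C}=1/2$ for all large $j$, so the first of the three series diverges and $\sum_j X^s_j$ diverges almost surely; yet $v_a(t)$ is concentrated on an $O(1)$-size window around $\log_2 t$ (it equals, up to a boundary case at $j=\lfloor\log_2 t\rfloor$, one less than the first index $j>\log_2 t$ with $X^{\ssup{a}}_j=2^j$, a geometric random shift), and one checks for instance $\Prob{v_a(t)=\lfloor \log_2 t\rfloor}\ge 1/4$, whence $\Prob{v_a(t)=v_b(t)}\ge 1/16$ for all $t$. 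Thus the event $\mathcal{G}_K(t)$ you rely on has probability bounded away from $1$, and the conclusion you are reducing to is simply not true. Proposition~\ref{prop:A factorial conv} holds anyway because $\Xi_\pi$ splits ties evenly; any correct proof must cope with persistent ties. This is precisely what the paper's argument does: it conditions on $\mathcal{A}_t$, exploits the unimodality of $k\mapsto\Xi_\pi(\ldots)$ in each coordinate, and applies the dispersion bound of Theorem~\ref{thm:dispersion} to the hitting times $\tau_a(n)$ in the time variable, rather than trying to separate the values $v_a(t)$ in the value variable.
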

Note that for the random series $\sum_{j=1}^{\infty} X^{s}_{j}$ and any $\lambda > 0$ one has, by symmetry of the associated random variables, that
\begin{equation*}
    \mathbb{E} \left[ X_j^s \mathbbm{1}_{|X_j^s|\leq \lambda} \right] = 0 \quad \text{ and } \quad \var{X_j^s \mathbbm{1}_{\left|X_j^s\right| \leq \lambda}} = \mathbb{E} \left[ (X_j^s)^2 \mathbbm{1}_{|X_j^s| \leq \lambda} \right].
\end{equation*}
Thus the Kolmogorov three series theorem implies that $\sum_{j=1}^{\infty} X^{s}_{j}$ diverges almost surely if and only if, for any $\lambda > 0$,
\begin{equation} \label{eq:div-imp-d-sum-div}
\sum_{j=1}^{\infty} D(X^{s}_{j}; \lambda) = \infty. 
\end{equation}

\begin{definition}
    For two real numbers $a,b \in \mathbb{R}$, we say that a function $h:[a,b] \to \mathbb{R}$ is {\sl unimodal} if there exists $t \in [a,b]$ such that $h$ is non-decreasing on $[a,t]$ and non-increasing on $[t,b]$.
\end{definition}

\begin{lem}
	Let $h:\mathbb{R} \to \left[0,1\right]$ be an increasing function, let $s>0$, and let $Y$ be a random variable. Then
	\begin{equation}\label{1-dim}
		\left|\mathbb{E}\left[h(Y+s)\right] - \mathbb{E}\left[h(Y)\right]\right| \leq Q(Y;s) .
	\end{equation}
    In particular, if $\tilde{h}\colon[a,b] \to \left[0,1\right]$ is unimodal and $\mathbb{P}\left(Y \in \left[ a,b - s \right] \right) = 1$, then
	\begin{equation}\label{unimodal}
		\left|\mathbb{E}\left[\tilde{h}(Y+s)\right] - \mathbb{E}\left[\tilde{h}(Y)\right]\right| \leq 2 Q(Y;s) .
	\end{equation}
\end{lem}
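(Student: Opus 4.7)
The plan is to express each expectation difference as an integral of the probability that $Y$ lies in a small interval against the Lebesgue--Stieltjes measure of $h$, and then use the concentration quantity $Q(Y;s)$ to bound the integrand uniformly. For the first inequality, I would treat the increasing $h\colon \mathbb{R} \to [0,1]$, in its right-continuous modification, as the cumulative distribution function of a positive Borel measure $\mu$ on $\mathbb{R}$ with total mass $\mu(\mathbb{R}) = \lim_{y\to\infty} h(y) - \lim_{y\to -\infty} h(y) \leq 1$. Using $h(y+s) - h(y) = \mu((y, y+s])$ and Fubini's theorem, one obtains
\begin{equation*}
\mathbb{E}[h(Y+s) - h(Y)] = \int_{\mathbb{R}} \mathbb{P}(t-s \leq Y < t)\, d\mu(t) \leq Q(Y;s)\, \mu(\mathbb{R}) \leq Q(Y;s),
\end{equation*}
and the left-hand side is non-negative by monotonicity of $h$, giving \eqref{1-dim}.

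For the unimodal inequality, the plan is to reduce to the first by decomposing $\tilde{h}$ into the sum of an increasing and a decreasing piece. Fix a mode $t^* \in [a,b]$, so that $\tilde{h}$ is non-decreasing on $[a,t^*]$ and non-increasing on $[t^*, b]$, and set
\begin{equation*}
h_\nearrow(y) := \begin{cases}\tilde{h}(a), & y < a, \\ \tilde{h}(y), & a \leq y \leq t^*, \\ \tilde{h}(t^*), & y > t^*,\end{cases} \qquad h_\searrow(y) := \begin{cases}\tilde{h}(t^*), & y < t^*, \\ \tilde{h}(y), & t^* \leq y \leq b, \\ \tilde{h}(b), & y > b.\end{cases}
\end{equation*}
Then $h_\nearrow$ is increasing on $\mathbb{R}$ with range in $[0,1]$, while the function $1 - h_\searrow$ is increasing with range in $[0,1]$. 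On $[a,b]$ one has the identity $\tilde{h}(y) = h_\nearrow(y) + h_\searrow(y) - \tilde{h}(t^*)$, and the assumption $\mathbb{P}(Y \in [a,b-s]) = 1$ ensures that both $Y$ and $Y+s$ lie in $[a,b]$ almost surely. Consequently
\begin{equation*}
\mathbb{E}[\tilde{h}(Y+s) - \tilde{h}(Y)] = \mathbb{E}[h_\nearrow(Y+s) - h_\nearrow(Y)] + \mathbb{E}[h_\searrow(Y+s) - h_\searrow(Y)],
\end{equation*}
and applying \eqref{1-dim} to $h_\nearrow$ and to $1 - h_\searrow$ bounds each summand in absolute value by $Q(Y;s)$, yielding \eqref{unimodal}.

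There is no serious obstacle here; the argument is essentially bookkeeping once the decomposition is in hand. The only mild subtlety is the measure-theoretic handling of possible jump discontinuities of $h$: the identities $h(y+s) - h(y) = \mu((y, y+s])$ versus $\mu([y, y+s))$ depend on the choice of left- or right-continuous modification, but this asymmetry is absorbed by the inequality $\mathbb{P}(t-s \leq Y < t) \leq Q(Y;s)$, since $Q$ uses closed intervals in its definition. Alternatively, the entire argument can be carried out first for continuous increasing $h$ by mollification and then extended to the general case via monotone convergence, which sidesteps the discontinuity issue altogether.
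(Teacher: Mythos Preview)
Your proof is correct and follows the same overall strategy as the paper: apply Fubini for \eqref{1-dim}, then decompose into monotone pieces for \eqref{unimodal}. The technical implementations differ. For \eqref{1-dim} the paper uses the layer-cake identity $h(y+s)-h(y)=\int_0^1\mathbbm{1}_{\{h(y)\le x\le h(y+s)\}}\,\dd x$ and observes that for each level $x$ the set $\{y:h(y)\le x\le h(y+s)\}$ is an interval of length at most $s$; pinning this interval down uses strict monotonicity, so the paper first treats strictly increasing $h$ and then passes to the general case by perturbing with $\eps(1+e^{-x})^{-1}$ and sending $\eps\to 0$. Your Lebesgue--Stieltjes route integrates over the domain rather than the range and avoids the strict-monotonicity detour; the trade-off is the care needed with the right-continuous modification, which you correctly flag (and which can also be bypassed directly via the pointwise inequality $h(y+s)-h(y)\le\mu([y,y+s])$, valid for \emph{any} increasing $h$, since only an upper bound is required). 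For \eqref{unimodal} the paper writes $\tilde h=h_1-h_2$ with $h_1,h_2:[a,b]\to[0,2]$ increasing and applies \eqref{1-dim} to $h_1/2$ and $h_2/2$; your additive splitting $\tilde h=h_\nearrow+h_\searrow-\tilde h(t^*)$ with $h_\nearrow$ and $1-h_\searrow$ increasing into $[0,1]$ is the same idea in a different guise, and both deliver the factor $2$.
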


\begin{proof}
    We start with the proof of \eqref{1-dim} when $h$ is strictly increasing. Let $\nu$ be a probability measure on $\mathbb{R}$ such that $\nu(A)=\mathbb{P}(Y\in A)$ for all open sets $A \subset \mathbb{R}$. Fubini's Theorem implies that
    \begin{align}
        &\notag  \mathbb{E}\left[h(Y+s)\right] - \mathbb{E}\left[h(Y)\right] = \int h(y+s)-h(y) \dd \nu (y) 
        \\
        & \label{insert h star}
        =  \int \int_0^1 \mathbbm{1}_{ \{h(y) \leq x \leq h(y+s) \}} \dd x  \dd \nu (y) 
        =
        \int_0^1 \int \mathbbm{1}_{ \{h(y) \leq x \leq h(y+s) \}} \dd \nu (y) \dd x .
    \end{align}
    Since $h$ is increasing, the set $I_x \coloneqq \left\{y: h(y) \leq x \leq h(y+s)\right\}$ is an interval. Since $h$ is also strictly increasing, we can define
    \begin{align*}
    	h^\star (x) = \sup \left\{ y \in \mathbb{R} : h(y) \leq x \right\} = \inf \left\{ y \in \mathbb{R} : h(y) \geq x \right\}.
    \end{align*}
    If $z \in I_x$, then $h(z)\leq x$ and thus $z \leq h^{\star}(x)$. Also, if $z \in I_x$, then $h(z+s)\geq x$ and thus $z+s \geq h^{\star}(x)$, or equivalently $z \geq h^\star(x)-s$. Thus we see that $I_x \subseteq \left[h^\star(x)-s,h^\star(x)\right]$. Inserting this into \eqref{insert h star}, we see that
    \begin{align*}
    	& \mathbb{E}\left[h(Y+s)\right] - \mathbb{E}\left[h(Y)\right] = \int_0^1 \int \mathbbm{1}_{ \{h(y) \leq x \leq h(y+s) \}} \dd \nu (y) \dd x
    	\leq
    	\int_0^1 \int \mathbbm{1}_{ \{ h^\star(x)-s \leq y \leq h^\star(x) \}} \dd \nu (y) \dd x
    	\\ 
    	&
    	=
    	\int_0^1 \mathbb{P} \left( h^\star(x)-s \leq Y \leq h^\star(x) \right) \dd x
    	\leq
    	\int_0^1 Q(Y;s) \dd x \leq Q(Y;s) .
    \end{align*}
    This finishes the proof for the case where $h$ is strictly increasing.
    When $s>0$ and $h$ is increasing, but not necessarily strictly increasing, define the functions $h_\eps:\mathbb{R}\to \left[0,1\right]$ by
    \begin{equation*}
    	h_\eps(x) = (1-\eps) h(x) + \eps \frac{1}{1+e^{-x}} .
    \end{equation*}
    For each $\eps > 0$, the function $h_\eps: \mathbb{R} \to [0,1]$ is strictly increasing. Thus, we can use the previous argument for strictly-increasing functions to get that $\mathbb{E}\left[h_\eps(Y+s)\right] - \mathbb{E}\left[h_\eps(Y)\right] \leq Q(Y;s)$. Passing $\eps \searrow 0$, we see that
    \begin{equation*}
    	\mathbb{E}\left[h(Y+s)\right] - \mathbb{E}\left[h(Y)\right] = \lim_{\eps \searrow 0} \left(\mathbb{E}\left[h_\eps(Y+s)\right] - \mathbb{E}\left[h_\eps(Y)\right] \right) \leq Q(Y;s) .
    \end{equation*}
    Here, we can safely interchange the expectation and the limit $\lim_{\eps \searrow 0}$ by the theorem of dominated convergence, since $|h_\eps|,|h|\leq 1$.
    
    The proof of \eqref{unimodal} easily follows once we observe that every unimodal function $\tilde{h} : [a,b] \to \left[0,1\right]$ can be written as the difference of two increasing functions $h_1, h_2 : [a,b] \to \left[0,2\right]$. Thus, using \eqref{1-dim}, we get that
    \begin{align*}
        &\left| \mathbb{E} \left[ \tilde{h}(Y+s) - \tilde{h}(Y) \right] \right| = \left| \mathbb{E} \left[ h_1(Y+s) - h_1(Y) \right] - \mathbb{E} \left[ h_2(Y+s) - h_2(Y) \right] \right|
        \\
        &
        \leq
        \max \left\{  \mathbb{E} \left[ h_1(Y+s) - h_1(Y) \right] , \mathbb{E} \left[ h_2(Y+s) - h_2(Y) \right] \right\}
        \\
        & =
        2 \max \left\{  \mathbb{E} \left[ \frac{h_1(Y+s)}{2} - \frac{h_1(Y)}{2} \right] , \mathbb{E} \left[ \frac{h_2(Y+s)}{2} - \frac{h_2(Y)}{2} \right] \right\}\leq 2 Q(Y;s),
    \end{align*}
    since $h_1/2$ and $h_2/2$ are increasing functions from $[a,b]$ to $[0,1]$.
    \end{proof}

    \begin{proof}[Proof of Proposition \ref{prop:A factorial conv}]
        Without loss of generality, we can assume that $\pi: [A]\to [A]$ is the identity -- all other cases follow by symmetry. Remember that we defined $\tau_a(n)= \sum_{j=1}^{n} X_j^{\ssup{a}}$ for $n \in \mathbb{N}$ and $a \in [A]$. Define the event $\mathcal{A}_t$ by
        \begin{equation*}
            \mathcal{A}_t = \bigcap_{a\in [A]} \left\{ \tau_{a}(n) \leq t  \right\}.
        \end{equation*}
        By symmetry, and the fact that $v_{a}^{\text{in}} = 0$ for all $a\in [A]$, it follows that $\mathbb{E} \left[ \Xi_\pi \left( v_a(t)_{a\in [A]} \right) \big| \mathcal{A}_t \right] = \frac{1}{A!}$. 
        The following claim quantifies the influence of initial time-shifts on the vector of values $\left(v_a(t)\right)_{a \in [A]}$.
        \begin{clm}\label{claim:variation}
        Let $n$ and $t$ be such that $\mathbb{P}\left( \mathcal{A}_t \right) > \frac{1}{2}$ and let $s_1,\ldots,s_A \geq 0$. Then
        \begin{equation} \label{eq:a-fact-conv-claim}
            \left| \mathbb{E} \left[ \Xi_\pi \left( v_a(t+s_a)_{a\in [A]} \right) \big| \mathcal{A}_t \right] - \mathbb{E} \left[ \Xi_\pi \left( v_a(t)_{a\in [A]} \right) \big| \mathcal{A}_t \right] \right|
            \leq \sum_{a \in [A]} 4 Q\left( \tau_{a}(n); s_a \right) .
        \end{equation}
    \end{clm}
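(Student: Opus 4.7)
The plan is to telescope the difference over coordinates and bound each resulting single-coordinate term via the unimodal shift inequality~\eqref{unimodal}. Define interpolating vectors $V^{(k)}\in\mathbb{N}_0^A$ for $k=0,1,\dots,A$ by $V^{(k)}_b:=v_b(t+s_b)$ if $b\leq k$ and $V^{(k)}_b:=v_b(t)$ otherwise; then $V^{(0)}=(v_b(t))_{b\in[A]}$ and $V^{(A)}=(v_b(t+s_b))_{b\in[A]}$. A telescoping sum together with the triangle inequality for conditional expectations reduces the claim to showing that, for each $a\in[A]$,
\begin{equation*}
\bigl|\mathbb{E}\bigl[\Xi_\pi(V^{(a)})-\Xi_\pi(V^{(a-1)})\bigm|\mathcal{A}_t\bigr]\bigr|\leq 4\,Q\bigl(\tau_a(n);s_a\bigr).
\end{equation*}
Note that $V^{(a)}$ and $V^{(a-1)}$ differ only in their $a$-th coordinate, which equals $v_a(t+s_a)$ and $v_a(t)$ respectively.

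To prove this per-coordinate bound I would condition on $\mathcal{G}_a:=\sigma\bigl((X_j^{\ssup{b}})_{b\neq a,\,j\geq 1},(X_j^{\ssup{a}})_{j>n}\bigr)$, which contains every random variable except $X_1^{\ssup{a}},\dots,X_n^{\ssup{a}}$; consequently the only remaining randomness is $\tau_a(n)=\sum_{j=1}^n X_j^{\ssup{a}}$, which is independent of $\mathcal{G}_a$. On $\{\tau_a(n)\leq s\}$ one has $v_a(s)=n+V_a^\ast(s-\tau_a(n))$ where $V_a^\ast$ is the counting process built from the $\mathcal{G}_a$-measurable sequence $(X_j^{\ssup{a}})_{j>n}$, so that setting
\begin{equation*}
\tilde g_a(x):=\Xi_\pi\bigl(V^{(a-1)}_1,\dots,V^{(a-1)}_{a-1},x,V^{(a-1)}_{a+1},\dots,V^{(a-1)}_A\bigr) \ \ \text{and}\ \ H(u):=\tilde g_a\bigl(n+V_a^\ast(t-u)\bigr),
\end{equation*}
one obtains $\Xi_\pi(V^{(a-1)})=H(\tau_a(n))$ and $\Xi_\pi(V^{(a)})=H(\tau_a(n)-s_a)$.

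The crux is then to show that $\tilde g_a\colon\mathbb{N}_0\to[0,1]$ is unimodal. By case analysis, if the fixed coordinates violate the partial ordering required by $\pi$, then $\tilde g_a\equiv 0$; otherwise $\tilde g_a(x)$ vanishes outside the interval $[V^{(a-1)}_{a+1},V^{(a-1)}_{a-1}]$, equals the constant $1/K_{\text{base}}$ on its interior, and takes strictly smaller values at its two endpoints, where $x$ coincides with a fixed neighbour and inflates the corresponding tie multiplicity from $\mu$ to $\mu+1$, multiplying the normalising count of compatible orderings by $\mu+1\geq 2$. Since $u\mapsto n+V_a^\ast(t-u)$ is non-increasing and integer-valued, composing the unimodal $\tilde g_a$ with this map yields a unimodal $H\colon\mathbb{R}\to[0,1]$.

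Finally, on $\mathcal{A}_t$ we have $\tau_a(n)\in[0,t]$, so $Y:=\tau_a(n)-s_a$ lies in $[-s_a,t-s_a]$ on which $H$ is unimodal. Applying~\eqref{unimodal} conditionally on $\mathcal{G}_a$ and $\mathcal{A}_t$ yields
\begin{equation*}
\bigl|\mathbb{E}[H(Y+s_a)-H(Y)\mid\mathcal{G}_a,\mathcal{A}_t]\bigr|\leq 2\,Q_{\mid\mathcal{A}_t}\bigl(\tau_a(n);s_a\bigr).
\end{equation*}
Since $\mathcal{A}_t\subseteq\{\tau_a(n)\leq t\}$ and $\mathbb{P}(\mathcal{A}_t)\geq 1/2$, we have $\mathbb{P}(\tau_a(n)\leq t)\geq 1/2$, and therefore $Q_{\mid\mathcal{A}_t}(\tau_a(n);s_a)\leq 2\,Q(\tau_a(n);s_a)$; this gives the per-$a$ bound $4\,Q(\tau_a(n);s_a)$, and summing over $a$ proves the claim. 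The main technical obstacle I anticipate is the careful unimodality analysis for $\tilde g_a$ --- in particular the endpoint tie-count argument, and the edge cases $a=1$, $a=A$, and $V^{(a-1)}_{a+1}=V^{(a-1)}_{a-1}$.
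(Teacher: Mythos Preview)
Your proposal is correct and follows essentially the same route as the paper: telescope over coordinates, freeze all randomness except $\tau_a(n)$, show that the one--variable slice $k\mapsto\Xi_\pi(\ldots,k,\ldots)$ is unimodal, apply the unimodal shift inequality~\eqref{unimodal}, and absorb the conditioning on $\mathcal{A}_t$ into a factor $2$ via $\mathbb{P}(\mathcal{A}_t)>1/2$. The only cosmetic difference is that the paper first passes to explicitly conditioned variables $(Y_a)_{a\in[A]}$ (the law of $(\tau_a(n))_a$ given $\mathcal{A}_t$) and the auxiliary process $\tilde v_a$, whereas you keep the conditioning implicit; the unimodality case analysis and the final bound are identical.
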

        To use Claim~\ref{claim:variation} to complete the proof of Proposition~\ref{prop:A factorial conv}, let $\eps \in (0,1/2)$ be given. Since $\sum_{j=1}^{\infty} X^{s}_{j}$ diverges almost surely, Equation~\eqref{eq:div-imp-d-sum-div} and Theorem \ref{thm:dispersion} imply that we can fix $n \in \mathbb{N}$ sufficiently large that $\sum_{a \in [A]} 4 Q\left( \tau_{a}(n); M \right) < \eps$. Given such a choice of $n$, choose $t= t(n,\eps)$ sufficiently large that $\mathbb{P}\left( \mathcal{A}_t \right) > 1 - \eps$.
        Then
        \begin{align*}
            & \left| \mathbb{E} \left[ \Xi_\pi \left( v_a(t+s_a)_{a\in [A]} \right) \right] - \frac{1}{A!} \right|
            \\
            &
            =
            \mathbb{P} \left( \mathcal{A}_t \right) \left| \mathbb{E} \left[ \Xi_\pi \left( v_a(t+s_a)_{a\in [A]} \right) \big| \mathcal{A}_t \right] - \frac{1}{A!} \right| 
            + 
            \mathbb{P} \left( \mathcal{A}_t^c \right) \left| \mathbb{E} \left[ \Xi_\pi \left( v_a(t+s_a)_{a\in [A]} \right) \big| \mathcal{A}_t^c \right] - \frac{1}{A!} \right|
            \\
            &
            \leq
            \left| \mathbb{E} \left[ \Xi_\pi \left( v_a(t+s_a)_{a\in [A]} \right) \big| \mathcal{A}_t \right] - \frac{1}{A!} \right| 
            + 
            \mathbb{P} \left( \mathcal{A}_t^c \right) 
            \stackrel{\eqref{eq:a-fact-conv-claim}}{\leq}
            \sum_{a \in [A]} 4 Q \left( \tau_{a}(n) ; s_a \right)
            + 
            \mathbb{P} \left( \mathcal{A}_t^c \right) 
            \\
            &
            \leq
            \sum_{a \in [A]} 4 Q \left( \tau_{a}(n) ; M \right)
            + 
            \mathbb{P} \left( \mathcal{A}_t^c \right) \leq 2 \eps ,
        \end{align*}
        where we used that $s_1,\ldots,s_A \in [0,M]$ in the second to last inequality.
        As $\eps \in (0,1/2)$ was arbitrary, this finishes the proof of Proposition \ref{prop:A factorial conv}.
    \end{proof}
    It remains to prove Claim~\ref{claim:variation}.
    \begin{proof}[Proof of Claim~\ref{claim:variation}]
        Let $(Y_a)_{a \in [A]}$ be random variables that have the distribution of $(\tau_{a}(n))_{a \in [A]}$ conditioned on $\mathcal{A}_t$. 
        These random variables are still independent and identically distributed. Further, they satisfy
        \begin{equation}\label{eq:factor 2}
            Q(Y_a; s ) = \sup_{x} \mathbb{P} \left( x \leq \tau_{a}(n) \leq x+s \big| \mathcal{A}_t \right) \leq \sup_{x} \frac{\mathbb{P} \left( x \leq \tau_{a}(n) \leq x+s \right)}{\mathbb{P}\left( \mathcal{A}_t \right)} \leq 2 Q\left( \tau_{a}(n) ; s \right), 
        \end{equation}
        where we used $\mathbb{P}\left( \mathcal{A}_t \right) > 1/2$ for the last inequality.
        Define a new process $\left( \Tilde{v}_a (s) \right)_{a \in [A]}$ by
        \begin{equation*}
            \Tilde{v}_a(s) = \sum_{i=n+1}^{\infty} \mathbbm{1}_{\left\{ Y_a + \sum_{j=n+1}^{i} X_j^{\ssup{a}} \leq s \right\} } .
        \end{equation*}
        It directly follows that
        \begin{align*}
            &\mathbb{E} \left[ \Xi_\pi \left( v_a(t+s_a)_{a\in [A]} \right) \big| \mathcal{A}_t \right] = \mathbb{E} \left[ \Xi_\pi \left( \left(\tilde{v}_a(t+s_a)\right)_{a\in [A]} \right)  \right] \quad \text{ and}
            \\
            &
            \mathbb{E} \left[ \Xi_\pi \left( v_a(t)_{a\in [A]} \right) \big| \mathcal{A}_t \right] = \mathbb{E} \left[ \Xi_\pi \big( \Tilde{v}_a(t)_{a\in [A]} \big)  \right] .
        \end{align*}
        Fix $b \in [A]$ and define the $\sigma$-algebra $\mathcal{G}_{b}$ by
        \begin{equation*}
            \mathcal{G}_{b} = \sigma \big( \left( Y_a \right)_{ a \in [A]\setminus \{b\}} , \big( X_i^{\ssup{a}} \big)_{a \in [A], i > n} \big).
        \end{equation*}
        For fixed $\left( Y_a \right)_{ a \in [A]\setminus \{b\}} , \big( X_i^{\ssup{a}} \big)_{a \in [A], i > n}$ and $(s_a)_{a\neq b}$, the random function 
        \begin{equation*}
            x \mapsto \mathbb{E} \left[ \Xi_\pi \left( \left(\tilde{v}_a(t+s_a)\right)_{a\in [A]} \right) \big| \mathcal{G}_b, Y_b = x \right] 
        \end{equation*}
        only depends on $t+s_b-x$ and is unimodal on the domain $[0,t+s_b]$. To see the unimodality, note that the values $\left( \tilde{v}_a(t+s_a) \right)_{a\neq b}$ are measurable with respect to $\mathcal{G}_b$ and that $\tilde{v}_b(t+s_b)$ is non-increasing in $Y_b$ and measurable given $Y_{b}$ and $\mathcal{G}_{b}$. Thus, it suffices to show that
        \begin{equation*}
            k \mapsto \mathbb{E} \left[ \Xi_\pi \left( \left(\tilde{v}_a(t+s_a)\right)_{a\in [A]} \right) \big| \left( \tilde{v}_a(t+s_a) \right)_{a\neq b},  \tilde{v}_b(t+s_b) = k \right] 
        \end{equation*}
        is unimodal in $k$.
        Conditioned on $\left( \tilde{v}_a(t+s_a) \right)_{a\neq b}$ and on $\tilde{v}_b(t+s_b)$, we have that
        \begin{equation*}
            \mathbb{E} \left[ \Xi_\pi \left( \left(\tilde{v}_a(t+s_a)\right)_{a\in [A]} \right) \big| \left( \tilde{v}_a(t+s_a) \right)_{a\neq b},  \tilde{v}_b(t+s_b) = k \right] =  \Xi_\pi \left( \left(\tilde{v}_a(t+s_a)\mathbbm{1}_{a \neq b} + k\mathbbm{1}_{a = b} \right)_{a\in [A]} \right).
        \end{equation*}
        Thus, it suffices to show that for a collection of integers $(x_a)_{a \in [A]}$ the function
        \begin{equation}\label{eq:unimodal function}
            k \mapsto \Xi_\pi \left( \left(x_a\mathbbm{1}_{a \neq b} + k\mathbbm{1}_{a = b} \right)_{a\in [A]} \right)
        \end{equation}
        is unimodal. Recall that we assume $\pi(i)\equiv i$. 
        The function defined in \eqref{eq:unimodal function} is always zero when the values $(x_{a})_{a\neq b}$ disallow the permutation to take place. When the values $(x_{a})_{a\neq b}$ allow the permutation to take place: 
        \begin{itemize}
            \item When $b = 1$, the function is increasing, since, once the ranking becomes possible, i.e., when $k=\max_{a \neq 1} x_a$, there are fewer permutations allowing the ranking for $k > \max_{a \neq 1} x_a$.
            \item  Similarly, when $b=A$, the function defined in \eqref{eq:unimodal function} is decreasing in $k$.
            \item When $b \in \{2,\ldots,A-1\}$, and $x_{b-1} + 1 < x_{b+1}$ then the function defined in \eqref{eq:unimodal function} is zero for $k < x_{b-1}$, non-decreasing on $x_{b-1} \leq k < x_{b+1}$, decreases at $x_{b+1}$ and again at $x_{b+1} +1$, where it drops to zero.
            \item When $b \in \{2,\ldots,A-1\}$, and $|x_{b+1}-x_{b-1}|\leq 1$, then the function defined in \eqref{eq:unimodal function} is zero for $k \notin \{x_{b-1},x_{b+1}\}$ and positive for $k \in \{x_{b-1},x_{b+1}\}$, which directly implies unimodality.
        \end{itemize}
        So in particular, for fixed $\left( Y_a \right)_{ a \in [A]\setminus \{b\}} , \big( X_i^{\ssup{a}} \big)_{a \in [A], i > n}$ and $(s_a)_{a \in [A]}$, we can write 
        \begin{equation*}
            \mathbb{E} \left[ \Xi_\pi \left( \left(\tilde{v}_a(t+s_a)\right)_{a\in [A]} \right) \big| \mathcal{G}_b, Y_b = x \right] = g(t+s_b - x ) , \quad x \in [0,t+s_b] ,
        \end{equation*}
        where the function $g$ is unimodal on $[0,t+s_b]$. We define $\left(\Tilde{s}_a\right)_{a\in [A]}$ by $\Tilde{s}_a = s_a \mathbbm{1}_{\{a \neq b \}}$.
        Since the random variable $Y_b$ is supported on $[0,t]$, for fixed $\left( Y_a \right)_{ a \in [A]\setminus \{b\}} , \big( X_i^{\ssup{a}} \big)_{a \in [A], i > n}$ and $(s_a)_{a \in [A]\setminus \{b\}}$, Equation~\eqref{1-dim} implies that
        \begin{align*}
            & \left| \mathbb{E} \left[ \Xi_\pi \left( \left(\tilde{v}_a(t+s_a)\right)_{a\in [A]} \right) \big| \mathcal{G}_b \right] - \mathbb{E} \left[ \Xi_\pi \left( \tilde{v}_a(t+\Tilde{s}_a)_{a\in [A]} \right) \big| \mathcal{G}_b \right] \right| = \left| \mathbb{E} \left[ g(t + s_b - Y_b) - g(t - Y_b) \big| \mathcal{G}_b \right] \right| \\
            &
            = \left| \mathbb{E} \left[ g(t+s_b - (Y_b + s_b)) - g(t + s_b - Y_b) \big| \mathcal{G}_b \right] \right| \leq 2 Q\left( Y_b; s_b \right).
        \end{align*}
        Then, by Jensen's inequality, we get
        \begin{align*}
         & \left| \mathbb{E} \left[ \Xi_\pi \left( \left(\tilde{v}_a(t+s_a)\right)_{a\in [A]} \right) - \Xi_\pi \left( \tilde{v}_a(t+\Tilde{s}_a)_{a\in [A]} \right) \right] \right|
         \\
         &
         \leq
          \mathbb{E} \left[ \left| \mathbb{E} \left[ \Xi_\pi \left( \left(\tilde{v}_a(t+s_a)\right)_{a\in [A]} \right) - \Xi_\pi \left( \tilde{v}_a(t+\Tilde{s}_a)_{a\in [A]} \right) \big| \mathcal{G}_b \right] \right| \right] \leq 2 Q\left( Y_b;|s_b| \right) 
          \overset{\eqref{eq:factor 2}}{\leq}
           4 Q\left( \tau_{b}(n) ;s_b \right)
           .
        \end{align*}
        Applying this argument for all $a \in [A]$, by the triangle inequality,
        \begin{align*}
            \left| \mathbb{E} \left[ \Xi_\pi \left( \left(\tilde{v}_a(t+s_a)\right)_{a\in [A]} \right) - \Xi_\pi \left( \tilde{v}_a(t)_{a\in [A]} \right) \right] \right| \leq \sum_{a \in [A]}  2 Q\left( Y_a;s_a \right) \leq \sum_{a \in [A]}  4 Q\left( \tau_{a}(n);s_a \right),
        \end{align*}
        which finishes the proof.
    \end{proof}

\subsection{Proof of Theorem \ref{thm:main}}\label{sec:starting}

In this section, we prove Theorem \ref{thm:main}, assuming Proposition \ref{prop:main-new}.

\begin{proof}[Proof of Theorem \ref{thm:main}]
    Let $(v_{a}^{\text{in}})_{a \in [A]} \in \N_0^{[A]}$ with $K \coloneqq \max_{a \in [A]} v_{a}^{\text{in}}$. Let $\big(X_i^{\ssup{a}}\big)_{i \in \N, a \in [A]}$ be random variables as described in Theorem \ref{thm:main} and let $\big(Z_i^{\ssup{a}}\big)_{i \in [K], a \in [A]}$ be i.i.d. random variables with
    \begin{equation} \label{eq:bernoulli-def}
        \mathbb{P} \left(Z_i^{\ssup{a}} = 0 \right) = \mathbb{P} \left( Z_i^{\ssup{a}} = 1 \right) = \frac{1}{2}
    \end{equation}
    that are furthermore independent of $\big(X_i^{\ssup{a}}\big)_{i \in \N, a \in [A]}$. Define the random variables $\big(Y_i^{\ssup{a}}\big)_{i \in \N, a \in [A]}$ by
    \begin{equation*}
        Y_i^{\ssup{a}} = \begin{cases}
            Z_i^{\ssup{a}} X_i^{\ssup{a}} & \text{ if } i \leq K \\
            X_i^{\ssup{a}} & \text{ if } i > K
        \end{cases} .
    \end{equation*}
    Define the processes $\left( v_a(t) \right)_{t\geq 0}$ and $\left( \tilde{v}_a(t) \right)_{t\geq 0}$ by
    \begin{align*}
        & v_a(t) = v_{a}^{\text{in}} + \sum_{j = v_{a}^{\text{in}} + 1}^{\infty} \mathbbm{1}_{\left\{ \sum_{i = v_{a}^{\text{in}} + 1}^{j} X_i^{\ssup{a}} \leq  t \right\}} \quad
        \text{ and  } \\
        & \tilde{v}_a(t) = \sum_{j = 1}^{\infty} \mathbbm{1}_{\left\{ \sum_{i = 1}^{j} Y_i^{\ssup{a}} \leq  t \right\}}  .
    \end{align*}
    Note that $\left( \tilde{v}_a(t) \right)_{a \in [A], t\geq 0}$ is a collection of competing birth processes, each starting from the common initial value $0$, and that $\sum_{i=1}^\infty \left( Y_i^{\ssup{1}} - Y_i^{\ssup{2}} \right)$ diverges almost surely.
    Proposition~\ref{prop:main-new} thus implies that, almost surely, for any permutation $\pi\colon [A] \rightarrow [A]$, 
\begin{equation}\label{eq:tilde permutation sequence exists}
\exists (t_{i})_{i \in \mathbb{N}} \in [0, \infty)^{\mathbb{N}}\colon \quad \lim_{i \to \infty} t_{i} = \infty \quad \text{and} \quad \forall i \in \mathbb{N} \quad 
\tilde{v}_{\pi(1)}(t_{i}) \geq \tilde{v}_{\pi(2)}(t_{i}) \geq \cdots \geq \tilde{v}_{\pi(A)}(t_{i}). 
\end{equation}
    Define the event $\mathcal{I}$ by
    \begin{equation*}
        \mathcal{I} = \bigcap_{a \in [A]} \bigcap_{i=1}^{v_{a}^{\text{in}}} \left\{ Z_i^{\ssup{a}} = 0 \right\} \cap \bigcap_{a \in [A]} \bigcap_{i=v_{a}^{\text{in}} + 1}^{K} \left\{ Z_i^{\ssup{a}} = 1 \right\}.
    \end{equation*}
    By the definition of $\big(Z_i^{\ssup{a}}\big)_{i \in [K], a \in [A]}$ in Equation~\eqref{eq:bernoulli-def}, we get that $\mathbb{P}\left( \mathcal{I} \right) = 2^{-KA} > 0$. Further, conditioned on the event $\mathcal{I}$ we have that $\tilde{v}_a(t) = v_a(t)$ for all $a\in [A]$ and $t \geq 0$. Since the event defined in Equation~\eqref{eq:tilde permutation sequence exists} holds almost surely, it still holds almost surely after conditioning on $\mathcal{I}$. Thus, we get that, almost surely, for any permutation $\pi\colon [A] \rightarrow [A]$, 
\begin{equation}\label{eq:tilde permutation sequence exists }
\exists (t_{i})_{i \in \mathbb{N}} \in [0, \infty)^{\mathbb{N}}\colon \quad \lim_{i \to \infty} t_{i} = \infty \quad \text{and} \quad \forall i \in \mathbb{N} \quad 
v_{\pi(1)}(t_{i}) \geq v_{\pi(2)}(t_{i}) \geq \cdots \geq v_{\pi(A)}(t_{i}),
\end{equation}
which proves Theorem \ref{thm:main}.
\end{proof}

    \subsection{Proof of Corollary~\ref{cor:conj}}
\begin{proof}[Proof of Corollary~\ref{cor:conj}]
Consider a collection of independent random variables $(X^{\ssup{a}}_{j})_{a \in [A], j \in \mathbb{N}}$ such that each $X^{\ssup{a}}_{j} \sim \exponentialrv(f(j-1))$. Then, if one sets $v_{a}^{\text{in}} := u_{a}(0)$ for all $a \in [A]$, with 
\[\tau_{n} := \inf\left\{t \geq 0\colon \sum_{a \in [A]} v_{a}(t) \geq n +\sum_{a \in [A]} v_{a}^{\text{in}} \right\}, 
\]
the processes $(v_{a}(\tau_{n})_{n \in \mathbb{N}, a \in [A]}$ and $(u_{a}(n))_{n \in \mathbb{N}, a \in [A]}$ are equal in distribution -- this is Rubin's construction, a consequence of properties of the exponential distribution. On the other hand, the assumption~\eqref{eq:div-variance} guarantees that the sum $\sum_{j=1}^{\infty} X^{s}_{j}$ diverges almost surely (see, for example,~\cite[Section~2.3]{fixation-leadership-growth-processes}). This in turn implies that $\sum_{j=1}^{\infty} X^{\ssup{1}}_{j} = \infty$ almost surely, so that 
\[
\lim_{n \to \infty} \tau_{n} = \infty,
\]
whilst, since the waiting times $X^{\ssup{1}}_{i}$ are always finite almost surely, for each $n \in \mathbb{N}$ we have $\tau_{n} < \infty$ almost surely. 

By applying Theorem~\ref{thm:main}, almost surely, for any permutation $\pi$ there exists a sequence $\left(t^{\pi}_{i}\right)_{i \in \mathbb{N}}$ such that $\lim_{i \to \infty} t^{\pi}_{i} = \infty$ and  
\[
\forall i \in \mathbb{N} \quad 
v_{\pi(1)}(t^{\pi}_{i}) \geq v_{\pi(2)}(t^{\pi}_{i}) \geq \cdots \geq v_{\pi(A)}(t^{\pi}_{i}).
\]
On the other hand, since the composition of values $\left(v_{a}(t) \right)_{a \in [A]}$ only changes at the times $\tau_{n}$, this implies that there exists a sequence of integer $(n^{\pi}_{i})_{i \in \mathbb{N}}$ with 
\[(v_{a}(\tau_{n^{\pi}_{i}})_{i \in \mathbb{N}})_{a \in [A]} = (v_{a}(t^{\pi}_{i}))_{a \in [A]}, \]
hence, for all $i \in \mathbb{N}$, almost surely 
\[
v_{\pi(1)}(\tau_{n^{\pi}_{i}}) \geq v_{\pi(2)}(\tau_{n^{\pi}_{i}}) \geq \cdots \geq v_{\pi(A)}(\tau_{n^{\pi}_{i}}). 
\]
Therefore, if $(u_{a}(n))_{n \in \mathbb{N}, a \in [A]}$ is a balls-in-bins process coupled to agree with $(v_{a}(\tau_{n})_{n \in \mathbb{N}, a \in [A]}$, almost surely, for all $i \in \mathbb{N}$ 
\[
u_{\pi(1)}(n^{\pi}_{i}) \geq u_{\pi(2)}(n^{\pi}_{i}) \geq \cdots \geq u_{\pi(A)}(n^{\pi}_{i}). 
\]
The result follows. 
\end{proof}
    
\section*{Acknowledgements}
TI is funded by Deutsche Forschungsgemeinschaft (DFG) through DFG Project no. $443759178$.

\bibliographystyle{abbrv}
\bibliography{refs}

\end{document}